\DeclareMathOperator{\crit}{crit}
\DeclareMathOperator{\Rp}{Re}
\newcommand{\R}{\mathbb{R}}
\newcommand{\C}{\mathbb{C}}
\newcommand{\Z}{\mathbb{Z}}
\theoremstyle{definition}
\newtheorem{lemma}{Lemma}
\newtheorem{ex}{Example}
\newtheorem{rmk}{Remark}
\newtheorem{prop}{Proposition}
\newtheorem{cor}{Corollary}
\newtheorem{df}{Definition}
\newtheorem{thm}{Theorem}
\newtheorem{conj}{Conjecture}
\begin{document}

\baselineskip.5cm
\title{The $h$-principle for broken Lefschetz fibrations}
\author[Jonathan Williams]{Jonathan Williams}
\address{Department of Mathematics, The University of Texas at Austin \newline
\hspace*{.375in}Austin, Texas 78712}
\email{\rm{jwilliam@math.utexas.edu}}
\thanks{The author would like to thank the following people, for without their kind support and thoughtful comments this work would not have appeared: Denis Auroux, R. \.{I}nan\c{c} Baykur, John Etnyre, David Gay, my thesis advisor Bob Gompf, Maxim Kazarian, Yank\i\ Lekili, and Rustam Sadykov.}
\begin{abstract}It is known that an arbitrary smooth, oriented 4-manifold admits the structure of what is called a \emph{broken Lefschetz fibration}. Given a broken fibration, there are certain modifications, realized as homotopies of the fibration map, that enable one to construct infinitely many distinct fibrations of the same manifold. The aim of this paper is to prove that these modifications are sufficient to obtain every broken fibration in a given homotopy class of smooth maps. One notable application is that adding an additional ``projection" move generates all broken fibrations, regardless of homotopy class. The paper ends with further applications and open problems.\end{abstract}
\maketitle
\tableofcontents
\begin{section}{Introduction}Over the past fifteen years, the interplay between symplectic topology and gauge theory has resulted in significant progress in the understanding of smooth 4-manifolds. More recent developments have resulted in an extension of this understanding toward the nonsymplectic setting via new formulations of smooth invariants, for example the standard surface count of \cite{DS}, the Heegaard-Floer theory of \cite{OS1} and the Lagrangian matching invariant of \cite{P1}. While the first was shown to equal the Seiberg-Witten invariant in \cite{U} in certain ``preferred" situations, the latter two are still only conjecturally equivalent to it. Comparisons and relationships between the three invariants abound. Relevant to the focus of this paper, they all involve a choice of fibration or handlebody structure (which are sometimes interchangeable) on which the corresponding invariant is presumed not to depend. The Heegaard-Floer invariant for 4-manifolds begins with a handlebody decomposition of a twice-punctured 4-manifold, which induces maps between the three-dimensional invariants of its boundary components. The Lagrangian matching invariant requires a \emph{broken Lefschetz fibration}, which in some ways resembles a circle-valued morse function on a 3-manifold without extrema, except with an extra dimension in both the source manifold and target sphere. This paper begins by presenting a list of modifications (which first appeared in \cite{L}) that may be performed on any broken fibration to produce a new one, and that can be used to modify any broken fibration such that it induces a handlebody decomposition of the 4-manifold in a straightforward way. Connected to the currently open question of whether the Lagrangian matching invariant is indeed independent of the chosen fibration structure, these modifications present a topological question which is interesting in its own right: are they \emph{complete} in the sense that they generate the entire collection of broken fibrations in a fixed homotopy class of maps? The main result of this paper is that they are indeed complete; that is, for a given homotopy class of maps, broken Lefschetz fibrations are unique up to these modifications.\end{section}

\begin{section}{A calculus of stable maps}\label{calc}Modifying the critical loci of broken Lefschetz fibrations (and thus possibly the diffeomorphism type of the total space) goes as far back as the first existence result, appearing in \cite{ADK}. In this paper there is a fixed smooth, compact 4-manifold, denoted $M$ throughout, whose spatial coordinates are given singly by $x_i$, $i=1,\ldots,4$ and collectively by $x$. For any homotopy, the homotopy parameter is always denoted $t$, and often it will be necessary to view a homotopy both as a 1-parameter family of maps and as a map $I\times M\rightarrow I\times F$ that respects the product structure of both spaces; thus, in order to streamline notation, subscripts will denote products when there is no ambiguity; for example, the notation $M_t$ will denote the slice $\{t\}\times M\subset[0,1]\times M$, and when appropriate, the symbol $M_I$ will denote $I\times M$, $I=[0,1]$. Using an appropriate notion of stability, the critical locus of a $k$-parameter family of stable maps between low-dimensional manifolds was characterized as part of a framework of local models in a rather general paper appearing in 1975 \cite{W}. In the special case of a 1-parameter family of maps $M^4\rightarrow S^2$ without definite folds, a concise and accessible description of the fibration structure imposed by these critical points appeared in \cite{L}. Wassermann's stability criterion allows for finitely many points $(t,m)\in M_I$ at which the map $M_{t}\rightarrow S^2_{t}$ fails to be a stable map, with three explicit local models for those points. These local models have descriptions as local modifications of an existing stable map $D^4\rightarrow D^2$ by homotopy. Extending this list to encompass definite folds, this section is a description of the tools used in the proof of the main theorem and the objects to which they apply. Beginning with \cite{ADK}, broken fibrations have been depicted and studied by drawing pictures of the base of the fibration along with the critical values of the map, adding decorations that describe the behavior of the critical locus with respect to the fibration structure adjacent to it (in this paper they are called \emph{base diagrams}). While this approach has its limitations as noted in Section \ref{appendix}, it presents a useful medium to introduce and study broken fibrations and stable maps in a general sense. It is useful to understand the critical locus of a family of such maps in more than one light: the first description is from the familiar perspective that it is a map from a 5-manifold to a 3-manifold such that the points at which its derivative vanishes obey certain specific local models.
\begin{subsubsection}{Stability}The main object of study in this paper is a 1-parameter family of maps $f_t:M^4\rightarrow S^2$, $t\in[0,1]$, endowed with a notion of stability such that, for all but finitely many values of $t$, $f_t$ is itself a stable map, and that moreover $f_t$ as a map from a 5-manifold to a 3-manifold is stable within the class of 1-parameter families. Given an appropriate equivalence relation for maps, $(r,s)$-stability is the condition that any sufficiently small $r$-parameter perturbation of a map which is itself an $s$-parameter family of maps preserves the equivalence class of that map within the collection of $s$-parameter families. In this paper, $r=s=1$; hence the term $(1,1)$-stability. For the bare notion of stability of maps between manifolds, that relation is \emph{right-left} equivalence, recognizable to topologists as simply change of coordinates, where two maps $g_1,g_2:X\rightarrow Y$ are equivalent when there are diffeomorphisms $f,h$ such that $g_1f=hg_2$. Then $f$ is stable when for each perturbation $f_t$ such that $f_0=f$, there exists some $\epsilon>0$ such that $f_t$ is equivalent to $f$ when $|t|<\epsilon$. More appropriate for this discussion is an equivalence that respects the product structure associated to a 1-parameter family of maps.
\begin{df}Let $f,g:\R^6\rightarrow\R$ be map germs with $f(0)=g(0)=0$. Associated to $f$ and $g$ are germs $F:\R^6\rightarrow\R^3$, $G:\R^6\rightarrow\R^3$, defined by $F(s,t,x_1,x_2,x_3,x_4)=(s,t,f(s,t,x_1,x_2,x_3,x_4))$ and $G(s,t,x_1,x_2,x_3,x_4)=(s,t,g(s,t,x_1,x_2,x_3,x_4))$, respectively. Then $f$ and $g$ are \emph{$(1,1)$-equivalent} if there are germs $\Phi,\Lambda,\psi,\phi$ of diffeomorphisms fixing the origin such that the following diagram commutes:\begin{equation}\label{eq:rsequiv}\xymatrix{\R^6\ar[d]^{\Phi}\ar[r]^{F} & \R^3\ar[d]^{\Lambda}\ar[r]^{p} & \R^2\ar[d]^{\psi}\ar[r]^q & \R\ar[d]^\phi\\\R^6\ar[r]^G & \R^3\ar[r]^p & \R^2\ar[r]^q & \R}\end{equation}where $p:\R^2\times\R\rightarrow\R^2$ and $q:\R\times\R\rightarrow\R$ are projections onto the first factor.\end{df}
As a map $\R^n\rightarrow\R^p$ is simply a $p$-tuple of maps $\R^n\rightarrow\R$, this definition easily generalizes to higher-dimensional target spaces (for instance, two dimensions is important for this paper, as we are considering maps from a 4-manifold to the 2-sphere). A slight generalization eliminates the condition that the maps be equivalent at the origin:
\begin{df}Let $\mathcal{E}(n)$ denote the space of map germs $\R^n\rightarrow\R$ represented by functions that fix the origin, and let $U$ be an open subset of $\R^n$. Let $f:U\rightarrow\R$ be a smooth function and let $z\in U$. Define the germ $f_z\in\mathcal{E}(p)$ by setting $f_z(y)=f(z+y)-f(z)$ for all $y$ near $0\in \R^n$. Let $U$ and $V$ be open subsets of $\R^{n+r+s}$ and let $f:U\rightarrow\R$, $g:V\rightarrow\R$ be smooth functions. Let $(x,u,v)\in U$ and let $(y,w,t)\in V$. Then $f$ at $(x,u,v)$ is \emph{$(r,s)$-equivalent to $g$ at $(y,w,t)$} if the germs $f_{(x,u,v)}$ and $g_{(y,w,t)}$ in $\mathcal{E}(n+r+s)$ are $(r,s)$-equivalent.\end{df}
\begin{df}Let $f\in\mathcal{E}(n+1+1)$. Then $f$ is \emph{weakly $(1,1)$-stable} if for every open neighborhood $U$ of $0$ in $\R^{n+1+1}$ and for every representative function $f':U\rightarrow\R$, the following holds: For any smooth function $h:U\rightarrow\R$ there is a real number $\epsilon>0$ such that if $t$ is any real number with $|t|<\epsilon$, then there is a point $(x,u,v)\in U$ such that $f'+th$ at $(x,u,v)$ is $(1,1)$-equivalent to $f'$ at $0$.\end{df}According to Theorem 3.15 of \cite{W}, the $(1,1)$ stability that appears in \cite{L} is equivalent to the one above; this one was chosen because it is more intuitive to the author both on its own and as a generalization of stability. This paper refers to a weakly $(1,1)$-stable family of maps from a smooth compact 4-manifold to a surface $F$ as a \emph{deformation} for short.\end{subsubsection}
\begin{subsection}{The Thom-Boardman Stratification}For a smooth map $f$ from an $n$-dimensional manifold $N$ to a $p$-dimensional manifold $P$, a critical point is simply any point $x\in N$ such that the derivative $df:TN\rightarrow TP$ satisfies $rk(df_x)<p$. Denoting the \emph{kernel rank} of $f$ at $x$ as $kr_f(x)=p-rk(df_x)$, the critical locus $S(f)$ is defined to be $\{x\in N:kr_f(x)>0\}$. Keeping track of kernel dimension, a common notation $S^{(k)}f$ denotes the locus of points where $kr_f(x)\leq k$; in other words, $S^{(k)}f$ is the closure of the locus of points where $f$ \emph{drops rank} by at least $k$ dimensions. In order to denote the locus of points where $kr_f$ is precisely $k$, one writes $S^{(k,0)}f$.\\

For the arbitrary smooth map $f:N\rightarrow P$, $S^{(k)}f$ is generally not a submanifold of $N$; for example, $S^{(3)}$ for the midpoint $t=0$ of the merging homotopy of Figure \ref{merge} is a pair of arcs which meet at a higher-order critical point in the interior of each. Supposing $f$ is stable, it is known that $S^{(k)}f$ is a smooth submanifold of $X$ which necessarily has positive codimension for $k\neq0$. Consequently, the restriction $f'=f|_{S^{(k)}f}$ is itself a smooth map between smooth manifolds, and there is a smooth submanifold of $S^{(k)}f$ defined by $S^{(\ell)}(f')$, which is denoted $S^{(k,\ell)}f$. Inductively, a \emph{Boardman symbol} is a nonincreasing sequence of positive integers $\mathcal{I}=(k_1,k_2,\ldots,k_n)$, where the entries of the sequence denote the kernel ranks of successive restrictions of $f$. Following the pattern, the corresponding stratum in the critical locus is denoted $S^\mathcal{I}f$.\end{subsection}
\begin{prop}\label{defsarestable}A deformation $f:M_I\rightarrow S^2_I$ is stable as a map from a 5-manifold to a 3-manifold. Consequently, there is a submanifold $S^\mathcal{I}f$ for each symbol $\mathcal{I}$.\end{prop}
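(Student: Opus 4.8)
The plan is to obtain global stability from Mather's theorem — a proper smooth map of manifolds is stable if and only if it is infinitesimally stable, and a proper map is infinitesimally stable precisely when each of its multigerms is — and then to read off the submanifold statement from the list of local models. The deformation $f\colon M_I\to S^2_I$ is proper because $M_I$ is compact; to sidestep the minor nuisance of the boundary $\{0,1\}\times M$ I would extend the homotopy to a slightly larger open interval, which every deformation admits since each of its local models is defined over an open interval of the parameter $t$ (if $M$ itself has boundary, a collar argument reduces to the closed case). It then suffices to show that every monogerm and multigerm of $f$, viewed as a map of a $5$-manifold to a $3$-manifold, is infinitesimally stable.

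First I would dispose of the monogerms. Away from the finitely many points $(t_i,m_i)\in M_I$ at which $f_{t_i}$ fails to be a stable map, the germ of $f$ is, in suitable coordinates, the suspension $g\times\mathrm{id}_\R$ of a stable germ $g\colon(\R^4,m)\to(\R^2,y)$ — a fold, a definite fold, or a cusp — and the suspension of an infinitesimally stable germ is again infinitesimally stable, so these present no difficulty. At each of the exceptional points, weak $(1,1)$-stability, Theorem 3.15 of \cite{W}, and the extension of Wassermann's list to definite folds described above force the germ of $f$ to coincide with one of finitely many explicit polynomial normal forms — the births, merges, and flips of the calculus of \cite{L} together with their definite-fold variants. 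For each such normal form I would check infinitesimal stability directly, verifying that every vector field along $f$ is the sum of one pulled back from $S^2_I$ and the image of one on $M_I$; more economically, one recognizes each transition germ, up to coordinate change, as the suspension of a familiar stable germ — for instance the birth model is the Whitney cusp $(t,x_4)\mapsto(t,x_4^{3}-tx_4)$ suspended by a nondegenerate quadratic in $x_2,x_3$ and an inert variable $x_1$, hence a stable $5\to 3$ germ. This model-by-model verification is the technical heart of the argument.

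Next I would handle the multigerms. Weak $(1,1)$-stability places every multigerm of $f$ in general position: each branch is individually stable by the previous step, and the branches — regular sheets, fold sheets, cusp images, and the transition loci — meet pairwise transversally in $S^2_I$; a multigerm of this kind is infinitesimally stable. With every multigerm infinitesimally stable and $f$ proper, Mather's theorem gives that $f$ is a stable map $M^5\to S^2_I$.

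For the ``Consequently'', one reads off from the explicit list of local models assembled above that each is a Boardman germ, i.e.\ that the jet extension $j^{k}f$ is transverse to the Thom--Boardman submanifolds of the jet space; since transversality is a local condition, $S^{\mathcal{I}}f$ is a submanifold of $M_I$ near each of its points and hence a smooth submanifold globally. The main obstacle is exactly the per-model infinitesimal-stability check for the transition germs in the second step, together with extracting the requisite general position of the multigerms cleanly from the definition of weak $(1,1)$-stability; the properness and boundary reductions and the suspended fold and cusp germs are routine.
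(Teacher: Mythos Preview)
Your approach is correct in outline but takes a genuinely different, and much heavier, route than the paper. The paper's proof is a one-line formal observation: in the commutative diagram defining $(1,1)$-equivalence, the outermost square (the one involving only $\Phi$ and $\phi$) is exactly the diagram for ordinary right-left equivalence. Hence any two germs that are $(1,1)$-equivalent are a fortiori right-left equivalent, and so the weak $(1,1)$-stability hypothesis immediately yields ordinary stability by chasing the definitions --- no local models, no Mather theory, no case analysis. Your argument instead enumerates all the transition germs from Wassermann's list and checks infinitesimal stability for each, then assembles global stability via Mather's criterion and a multigerm transversality check. This works, but it front-loads the classification of local models (which in the paper's exposition comes \emph{after} this proposition and is logically independent of it) and requires you to justify the multigerm general-position claim, which you yourself flag as not fully extracted from the definition. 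The paper's approach buys you the result essentially for free from the structure of the equivalence relation; yours buys a more explicit picture of why each germ is stable, at the cost of considerably more work and a mild risk of circularity in the order of exposition.
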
\begin{proof}The commutative square appearing in Equation \ref{eq:rsequiv} specializes to right-left equivalence by considering the maps between the four outermost corners as follows:\begin{equation}\xymatrix{\R^6\ar[d]^{\Phi}\ar[r]^{p\circ q\circ F} & \R\ar[d]^\phi\\\R^6\ar[r]^{p\circ q\circ G} & \R}\end{equation}From there, the right-left equivalence after sufficiently small perturbation follows through the rest of the definitions with little difficulty.\end{proof}
\begin{subsection}{Critical loci of deformations}Viewing $\R^4$ with complex coordinates $(z,w)$, various audiences will recognize the following map as the local model for a Lefschetz critical point:\begin{equation}\label{eq:lcp}(z,w)\mapsto zw.\end{equation}A smooth map whose critical locus consists of isolated Lefschetz critical points is called a \emph{Lefschetz fibration}. The local and global properties are discussed in detail in \cite{GS}. There are several other classical, locally defined critical point models, which are of interest in this paper because they characterize what it means to be a deformation. These critical points also impose certain fibration-like structures on $M$, which are naturally described in the order given by their stratification in the next few paragraphs. Though the following discussion involves deformations as the relevant objects of interest, much of what follows is also true for stable maps from 4-manifolds to surfaces and from 5-manifolds to 3-manifolds.
\begin{subsubsection}{Folds}The highest stratum, indeed the entire critical locus of a deformation, is $S^{(3)}$, the closure of the fold locus. For a map from a 4-manifold $M$ to a surface, it is defined by the following local model $\R^4\rightarrow\R^2$ up to change of local coordinates:\begin{equation}\label{eq:fold1}(x_1,x_2,x_3,x_4)\mapsto(x_1,x_2^2+x_3^2\pm x_4^2).\end{equation}When the sign above is positive, the critical point is known as a \emph{definite fold}; when it is negative, it is known as an \emph{indefinite fold} or \emph{broken singularity}. When there is no ambiguity, we refer to the definite fold locus of a map $h:X\rightarrow Y$ as $S_+(h)$ (or simply $S_+$) and the indefinite fold locus as $S_-(h)$ (or simply $S_-$). For a smooth surface $N^2$, any smooth map $M\rightarrow N$ whose critical locus is a union of Lefschetz singularities and indefinite folds is called a \emph{broken Lefschetz fibration}, and though the results of this paper apply to the case of positive genus, usually $N=S^2$ in the literature concerning broken fibrations. From the local models, it is evident that the fold locus is a smoothly embedded 1-submanifold of $M$ whose image is an immersed 1-submanifold of $S^2$; see also \cite{B1} concerning the topology of indefinite folds. It will be convenient to view a deformation $f_t:M^4\rightarrow S^2$ as a map $F:M_I\rightarrow S^2_I$ such that the image of each slice $F|{M_t}$ is contained in $S^2_t$. For such maps (indeed for general maps from 5-manifolds to 3-manifolds), it turns out that folds are obtained by taking the direct product of the previous model with $\R$. That is, up to change of local coordinates, fold points of deformations are given by the following equation:\begin{equation}\label{eq:fold2}(t,x_1,x_2,x_3,x_4)\mapsto(t,x_1,x_2^2+x_3^2\pm x_4^2)\end{equation} with the same dichotomy between definite and indefinite folds \cite{AGV}.
\begin{figure}\centering\begin{minipage}[t]{0.5\linewidth}\centering\includegraphics{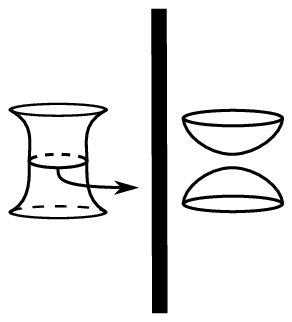}\caption{Base diagram of an indefinite fold} \label{fold}\end{minipage}\begin{minipage}[t]{0.5\linewidth}\centering\includegraphics{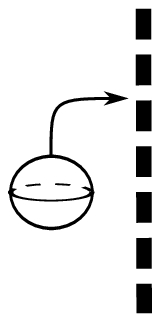}\caption{Base diagram of a definite fold.} \label{deffold}\end{minipage}\end{figure} For a fixed value of $t$, the fibers near an indefinite fold are those of an indefinite morse singularity, or in other words a morse singularity whose index is neither maximal nor minimal. The map naturally parametrizes the target disk via the coordinates $(x_1,x_2^2+x_3^2-x_4^2)$, and for decreasing values of the coordinate function $x_2^2+x_3^2-x_4^2$, such a singularity corresponds to a 3-dimensional 2-handle (for increasing values, a 3-dimensional 1-handle). This is illustrated by examining the point preimages of a horizontal arc parametrized to go from left to right in Figure \ref{fold}. As one travels from left to right, a circle depicted on the fiber at the left shrinks to a point, resulting in a nodal fiber lying over each point in the vertical arc. Then the fiber separates into two disks as shown on the right. This is a convenient notation for base diagrams first employed by Auroux in which the arrow gives the orientation of the indefinite arc as well as specifying which circle contracts as one approaches the fold. As in Example 2 of \cite{ADK}, we refer to this circle as the \emph{vanishing cycle} associated to the indefinite fold. One might use the more precise \emph{round vanishing cycle} when it is important to distinguish between those of indefinite folds and Lefschetz critical points. Figure \ref{deffold} depicts the image of a definite fold. Again, the arrow points in the direction of decreasing values of $(x_2^2+x_3^2+x_4^2)$; indeed the fiber is empty where this coordinate would be negative. The preimage of each point in the vertical arc is a point, which expands into a sphere moving toward the left. For those who are familiar with the interplay of near-symplectic structures with broken fibrations, it is immediately clear that definite folds are not compatible with near-symplectic structures, as this sphere is nullhomologous. In some sense, the spherical fibers in the picture are themselves vanishing cycles. With this in mind, it is natural to draw an arrow from the sphere to the image of the definite fold at which that sphere contracts. This specificity can be rather useful when dealing with base diagrams involving multiple fold arcs and fiber components.\end{subsubsection}
\begin{subsubsection}{Cusps}The second-highest stratum of the critical manifold of a deformation is the (1-dimensional) closure of the cusp locus, denoted $S^{(3,1)}$, which can be singled out as the critical manifold of the map obtained by restricting a deformation to its own critical locus. A cusp point of a map $M^4\rightarrow S^2$ has the local model $$(x_1,x_2,x_3,x_4)\mapsto(x_1,x_2^3-3x_1x_2+x_3^2\pm x_4^2).$$ When the sign above is negative, the cusp is adjacent to two indefinite fold arcs as in Figure \ref{cusp}; when positive, one arc is definite and the other indefinite as in Figure \ref{defcusp}. In each of Figures \ref{cusp} and \ref{defcusp} the critical locus is a smoothly embedded curve in $D^4$ consisting of two arcs of fold points which meet at an isolated cusp point. Similar to folds, the cusp locus of a deformation has the local model obtained by crossing with $\R$: $$(t,x_1,x_2,x_3,x_4)\mapsto(t,x_1,x_2^3-3x_1x_2+x_3^2\pm x_4^2).$$Another way to understand the local model for a cusp is by considering the family of restrictions of the maps in Figures \ref{cusp} and \ref{defcusp} to the preimages of vertical arcs, which describes a homotopy $D^3_{[-\epsilon,\epsilon]}\rightarrow[-\epsilon,\epsilon]_{[-\epsilon,\epsilon]}$ that traces the formation of a pair of canceling morse critical points when the family is seen moving from left to right. For this reason, the indices of the critical points must differ by 1, which implies any cusp involving a definite fold (index 0 or 3) as one of its constituent arcs must also involve a fold whose corresponding morse index is 1 or 2 (an indefinite fold). Thus there is only one kind of cusp involving definite folds up to local parametrization, the one shown in Figure \ref{defcusp}. As there is no ambiguity, we call this a \emph{definite cusp}.\begin{figure}\centering\begin{minipage}[ht]{0.5\linewidth}\centering\includegraphics{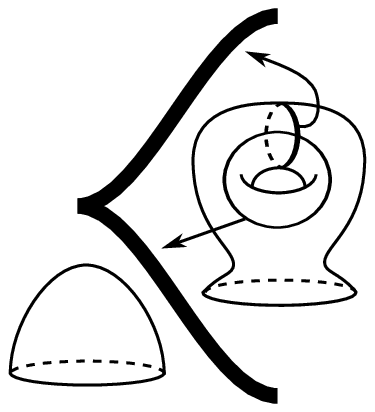}\caption{The indefinite cusp.} \label{cusp}\end{minipage}\begin{minipage}[ht]{0.5\linewidth}\centering\includegraphics{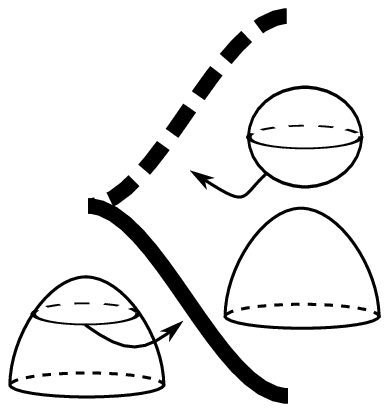}\caption{The definite cusp.} \label{defcusp}\end{minipage}\end{figure} The fibers above key parts of the target disk are shown in the figures: in both figures the fiber above the actual cusp point is homeomorphic to a disk. In Figure \ref{cusp}, the vanishing cycles are a pair of simple closed curves in a punctured torus that transversely intersect at a unique point. See also \cite{L,Lev} concerning cusp points.\end{subsubsection}
\begin{subsubsection}{Swallowtails} The lowest stratum for deformations does not appear in a stable map $M^4\rightarrow S^2$ for dimensional reasons. The locus of swallowtails has Boardman symbol $(3,1,1)$ and consists of a finite collection of points, each of which has the local model \begin{equation}\label{eq:swallowtail}(t,x_1,x_2,x_3,x_4)\mapsto(t,x_2^4+x_2^2t+x_1x_2\pm x_3^2\pm x_4^2).\end{equation}Up to right-left equivalence, there are three types of swallowtails, distinguished for the purposes of this paper by the fact that each produces a pair of cusps and thus three arcs of fold points in a base picture. Local behavior for the three swallowtails is discussed below.\end{subsubsection}

Notably, Lefschetz critical points are not included in the preceding list. This is because they are unstable (see, for example, Figure 6 of \cite{L}). As remarked above, the classification of stable critical loci (both for general stable maps \cite{AGV} and for $k$-parameter families of maps for small $k$ \cite{W}) has been completed in low dimensions. For this reason, one may define the stability of a map by restricting the form its critical locus may take. By the results of \cite{W}, for deformations the classification consists of the list above: folds, cusps, and swallowtails. Thus a smooth homotopy $M_I\rightarrow S^2_I$ is a deformation if and only if each point in its critical locus has a local model chosen from one of these. A deformation $f$ is itself a stable map, but this does not imply that each slice $f|_{M_t}$ is stable. For a deformation, at a finite collection of points in $M_I$ there are critical points which are unstable when considered in the context of maps from smooth 4-manifolds to surfaces, but which become stable when exhibited in 1-parameter families. What follows is a list of these possibilities (which is complete by Theorem 4.4 of \cite{L}), exhibited as moves one can perform on a base diagram. Each move is given locally as a homotopy of a stable map $D^4\rightarrow D^2$, supported away from the boundary spheres of the source and target spaces.\end{subsection}
\begin{subsection}{Catalog of moves involving indefinite folds}\label{icat}The following modifications appear in \cite{L}; they appear below for convenience and a complete exposition.
\begin{subsubsection}{Isotopy}The first member of the list is perhaps the most obvious: called \emph{isotopies} in Theorem 4.1 of \cite{L}, one may perform a $(1,1)$-stable homotopy of a map $M\rightarrow S^2$ in which the stratified isotopy class of its critical locus is unchanged. In a base diagram, one sees the critical set moving around after the fashion of an isotopy of knot diagrams. In practice, showing that a given modification of base diagrams is of this type can be subtle; however isotopies offer a surprising variety of possible modifications of a given map. Stated precisely, there is a notion of \emph{local left-right equivalence} where maps $f,g:X\rightarrow Y$ are equivalent if there is an open cover $\{U_i\}$ of $X$ such that each restriction $f|_{U_i}$ is right-left equivalent to $g|_{U_i}$. Then an isotopy of $f$ is a one-parameter family of locally right-left equivalent maps.\end{subsubsection}
\begin{subsubsection}{Birth}The local model for births appears in \cite{EM} as follows:\begin{equation}\label{eq:birtheq}(t,x_1,x_2,x_3,x_4)\rightarrow(t,x_1,x_2^3+3(x_1^2-t)x_2+x_3^2-x_4^2).\end{equation} For $t<0$ the map has empty critical locus. For $t\geq0$, the critical locus is $\{x_1^2+x_2^2=t,x_3=x_4=0\}$. For $t=0$ the local model is similar to that of a cusp, and it will be important to know that it actually is a cusp, so the fact appears in the following proposition. In this paper (especially in the proof of Lemma \ref{simplemma}), this cusp is called an indefinite (or, as discussed below, a definite) \emph{birth point}.
	\begin{prop}\label{birthcusp}For the birth deformation, the origin $\R\times\R^4$ is a cusp point.\end{prop}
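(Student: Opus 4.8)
The plan is to prove Proposition~\ref{birthcusp} by an explicit right-left change of coordinates carrying the birth germ at the origin onto the local model for an (indefinite) cusp of a deformation. Write $B(t,x_1,x_2,x_3,x_4)=(t,x_1,\beta)$ for the birth map of \eqref{eq:birtheq}, with $\beta=x_2^3+3(x_1^2-t)x_2+x_3^2-x_4^2$, and let $C(t,x_1,x_2,x_3,x_4)=(t,x_1,x_2^3-3x_1x_2+x_3^2-x_4^2)$ be the cusp model from Section~\ref{calc}; the sign $-x_4^2$ is the one to aim for, since that is the sign in \eqref{eq:birtheq}, so the relevant cusp is the indefinite one of Figure~\ref{cusp}. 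I would first record the cheap fact that $\beta$ and all its first partials vanish at the origin, so $dB$ has rank $2$ there and the origin lies in the top stratum $S^{(3)}B$; this is what makes a cusp model a candidate at all.

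The only step with real content is producing the coordinate change. Define $\Phi:\R^5\to\R^5$ by $\Phi(t,x_1,x_2,x_3,x_4)=(x_1,\,t-x_1^2,\,x_2,x_3,x_4)$ and $\Lambda:\R^3\to\R^3$ by $\Lambda(a,b,c)=(b+a^2,\,a,\,c)$. Both fix the origin and are diffeomorphisms (inverses $(T,X_1,x_2,x_3,x_4)\mapsto(X_1+T^2,T,x_2,x_3,x_4)$ and $(a,b,c)\mapsto(b,\,a-b^2,\,c)$). A one-line substitution gives
\[
C\circ\Phi(t,x_1,x_2,x_3,x_4)=\bigl(x_1,\ t-x_1^2,\ x_2^3+3(x_1^2-t)x_2+x_3^2-x_4^2\bigr),
\]
and applying $\Lambda$ to the right-hand side returns exactly $(t,x_1,\beta)=B(t,x_1,x_2,x_3,x_4)$. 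Thus $B=\Lambda\circ C\circ\Phi$ as germs at the origin, which is precisely right-left equivalence, so the origin is a cusp point of $B$, of the indefinite type. (The ``definite birth'' with $+x_4^2$ in place of $-x_4^2$ is handled by the identical computation, landing on the definite cusp of Figure~\ref{defcusp}.)

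As a sanity check, and to explain where $\Phi$ comes from, I would also read off the Thom-Boardman symbol at the origin directly: the critical locus $S(B)=\{x_3=x_4=0,\ t=x_1^2+x_2^2\}$ is a smooth surface (a graph over the $(x_1,x_2)$-plane), the restriction $B|_{S(B)}$ is $(x_1,x_2)\mapsto(x_1^2+x_2^2,\,x_1,\,-2x_2^3)$, whose own critical locus is the line $x_2=0$, and restricting once more gives the immersion $x_1\mapsto(x_1^2,x_1,0)$. So the symbol at the origin is exactly $(3,1,0)$, not $(3,1,1)$ — a cusp, not a swallowtail — and since a deformation is a stable map by Proposition~\ref{defsarestable}, the classification of \cite{W} again forces the standard cusp model, with the sign on $x_4^2$ deciding between Figures~\ref{cusp} and \ref{defcusp}.

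The potential trap, and the closest thing to an obstacle, is the temptation to look for an equivalence preserving the parameter coordinate $t$: none exists, because $T_0S(B)$ is transverse to the $t$-direction while the tangent space at the origin to the critical locus of the model $C$ contains it — which is exactly why $\Phi$ above has to move $t$ into the second slot. This is not a genuine difficulty: ``cusp point'' refers to the right-left type of $B$ as a germ of maps $\R^5\to\R^3$, equivalently to which Thom-Boardman stratum the origin occupies, and for that notion mixing $t$ with the fibre coordinates is allowed; the product normal form $(t,x_1,\dots)$ used in Section~\ref{calc} is merely one convenient representative of that class.
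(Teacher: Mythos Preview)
Your proof is correct, and in fact you give two arguments where the paper gives one. Your ``sanity check'' \emph{is} the paper's proof: the paper verifies that the origin lies in $S^{(3,1,0)}(F)$ by parametrizing the critical surface by $(x_1,x_2)$, observing that the restriction reads $(x_1,x_2)\mapsto(x_1,-2x_2^3)$ (the paper silently drops the $t$-component $x_1^2+x_2^2$, which does not affect the rank computation), noting the rank drops along $\{x_2=0\}$, and checking that the further restriction is an immersion. The classification of stable germs (via Proposition~\ref{defsarestable} and \cite{An2,W}) then forces the cusp model.

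Your primary argument --- the explicit right-left equivalence $B=\Lambda\circ C\circ\Phi$ --- is a genuinely different route: it bypasses the Boardman-symbol bookkeeping and the appeal to the classification theorem entirely, at the cost of having to guess the diffeomorphisms. The paper's approach is more systematic (the same template is reused verbatim for Proposition~\ref{mergecusp}), while yours is more self-contained and makes the indefinite/definite dichotomy transparent without quoting external results. Your closing remark about $(1,1)$-equivalence versus plain right-left equivalence is well taken and matches how the paper actually uses the word ``cusp'' here: the proof in the paper only ever checks the Boardman stratum, which is a right-left notion, exactly as you say.
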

	\begin{proof}Temporarily denote the map of Equation \ref{eq:birtheq} by $F$ and its critical locus given above by $S$. Because $F$ is a deformation, Proposition \ref{defsarestable} implies it is a stable map $\R^5\rightarrow\R^3$ and it is known that the condition to verify is that the point in question lies in $S^{(3,1,0)}(F)$ (see, e.g., Section 3 of \cite{An2}). The restriction $F|_S$ can be written as a map $F|_S:\R^2\rightarrow\R^2$ given by $(x_1,x_2)\mapsto(x_1,-2x_2^3)$. At the origin the derivative of this map has rank 1, putting that point in $S^{(3,1)}$. Further restricting $F|_S$ to its own critical locus $\{x_2=0\}$ yields the identity map on $x_1$ which has empty critical locus. Thus the origin is in $S^{(3,1,0)}(F)$ as desired.\end{proof}It is known (\cite{EM,L}) that for the slice $\{t=\epsilon>0\}$ the critical locus is a circle composed of two open arcs of indefinite fold points connected to each other by two cusp points as in Figure \ref{birth}. Thus the critical locus of this move in $D^4_{[-\epsilon,\epsilon]}$ can be described as a hemisphere with a cusp equator. More precisely, taking the projection $T:S\rightarrow[0,1]$ to the $t$-axis as a morse function, Theorem 4.4 of \cite{L} implies that any index 0 (or index 2 replacing $t$ with $-t$) critical point of this function must correspond to a birth, which either must be of this type, or a ``definite birth" described below. \begin{figure}[ht]\begin{center}\includegraphics[width=0.4\linewidth]{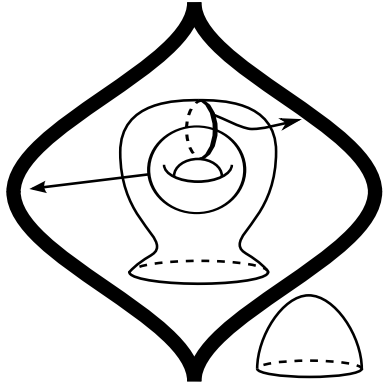}\end{center}\caption{A birth model that creates a circle of critical points.}\label{birth}\end{figure}The fibration structure for the birth outside of the slice $\{t=0\}$ is as follows: for $t<0$ it is a trivial disk bundle. For $t>0$, the fibers outside the circle are disks, while the fibers inside the circle are punctured tori with vanishing cycles as shown in Figure \ref{birth} when traveling to the left and right from a reference point in the center of the circle.\end{subsubsection}
\begin{subsubsection}{Merge}In the same way that a birth corresponds to an index 0 or 2 critical point of the projection of $S$ to the $t$-axis, the merging move corresponds to an index 1 critical point. The local model in the indefinite case is as follows:\begin{equation}\label{eq:mergeeq}(t,x_1,x_2,x_3,x_4)\mapsto(t,x_1,x_2^3+3(t-x_1^2)x_2+x_3^2-x_4^2).\end{equation} The critical locus is given by $\{x_2^2-x_1^2=t,\ x_3=x_4=0\}$ which parametrizes a saddle. Similar to the birth move, there are two obvious cusp arcs which meet at the origin at a point which turns out to also be a cusp, and in this paper (especially in the proof of Lemma \ref{simplemma}), this point is called an indefinite (or, as discussed below, a definite) \emph{merge point}.
	\begin{prop}\label{mergecusp}For the merge deformation, the origin in $\R\times\R^4$ is also a cusp point.\end{prop}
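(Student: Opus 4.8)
The plan is to follow the proof of Proposition~\ref{birthcusp} almost verbatim, since near the origin the merge deformation has the same qualitative shape as the birth deformation. Write $F$ for the map of Equation~\ref{eq:mergeeq}. Because $F$ is a deformation, Proposition~\ref{defsarestable} shows it is a stable map $\R^5\to\R^3$, so (as in Section~3 of~\cite{An2}) it suffices to verify that the origin lies in the stratum $S^{(3,1,0)}(F)$.

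First I would pin down the critical locus. Differentiating the last coordinate $g=x_2^3+3(t-x_1^2)x_2+x_3^2-x_4^2$ in the directions $x_2,x_3,x_4$ gives $3(x_2^2+t-x_1^2)$, $2x_3$, and $-2x_4$, which vanish simultaneously exactly on the surface $S=\{x_1^2-x_2^2=t,\ x_3=x_4=0\}$, smoothly parametrized by $(x_1,x_2)\mapsto(x_1^2-x_2^2,x_1,x_2,0,0)$; in particular the origin lies on $S$. Substituting this parametrization into $F$ one finds, just as for the birth, that the cubic collapses: $F|_S(x_1,x_2)=(x_1^2-x_2^2,\,x_1,\,-2x_2^3)$, which after an evident change of coordinates on the target is $(x_1,x_2)\mapsto(x_1,-2x_2^3)$. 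Its differential has rank $1$ at the origin, placing that point in $S^{(3,1)}(F)$, and the critical locus of $F|_S$ is the line $\{x_2=0\}$. Restricting $F|_S$ once more to this line gives $x_1\mapsto(x_1^2,x_1,0)$, an immersion with empty critical locus. Hence the origin lies in $S^{(3,1,0)}(F)$ and is therefore a cusp point.

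The argument is a short computation rather than anything delicate; the only things to watch are the bookkeeping of which restriction is taken at each stage, and the observation that the cubic term in $g$ really does collapse on $S$ (so that $F|_S$ has a genuine fold, not a worse singularity, at the origin). One should also note that the coefficient $3(t-x_1^2)$ here has the opposite sign to the $3(x_1^2-t)$ of the birth model, which merely interchanges the roles played by the two fiber-direction variables and does not affect the conclusion, so in fact essentially no new work is needed beyond re-running the earlier calculation.
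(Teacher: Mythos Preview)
Your proposal is correct and follows essentially the same approach as the paper: invoke Proposition~\ref{defsarestable} for stability, then verify directly that the origin lies in $S^{(3,1,0)}(F)$ by computing $F|_S$, finding its critical locus $\{x_2=0\}$, and checking that the further restriction is an immersion. The paper's own proof is nearly identical to that of Proposition~\ref{birthcusp}, parametrizing $S$ slightly differently (writing $F|_S$ as $(t,x)\mapsto(t,-2x^3)$) but reaching the same conclusion by the same route; your version is, if anything, a bit more explicit about the parametrization and the intermediate map to $\R^3$.
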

	\begin{proof}Temporarily denote the map of Equation \ref{eq:mergeeq} by $F$ and its critical locus given above by $S$. Because $F$ is a deformation, Proposition \ref{defsarestable} implies it is a stable map $\R^5\rightarrow\R^3$ and like Proposition \ref{birthcusp} it is sufficient to show that the point in question lies in $S^{(3,1,0)}(F)$. The restriction $F|_S$ can be written as a map $F|_S:\R^2\rightarrow\R^2$ given by $(t,x)\mapsto(t,-2x^3)$. At the origin the derivative of this map has rank 1, putting that point in $S^{(3,1)}$. Further restricting $F|_S$ to its own critical locus $\{x=0\}$ yields the identity map on $t$ which has empty critical locus. Thus the origin is in $S^{(3,1,0)}(F)$ as desired.\end{proof}The proposition shows that the critical locus of a merge is a disk composed of fold points which is bisected by an arc of cusp points. Thus, as stratified surfaces the critical loci of the merge and birth deformations are diffeomorphic. In this paper, a merging move is always depicted in a base diagram along with a curve along which the move occurs. When the curve is between two fold points, this signals that a merge is possible between those points and occurs in the interval before the next picture; when it goes between a pair of cusps, it signals an inverse merge in the same way.\begin{figure}[ht]\begin{center}\includegraphics[width=\linewidth]{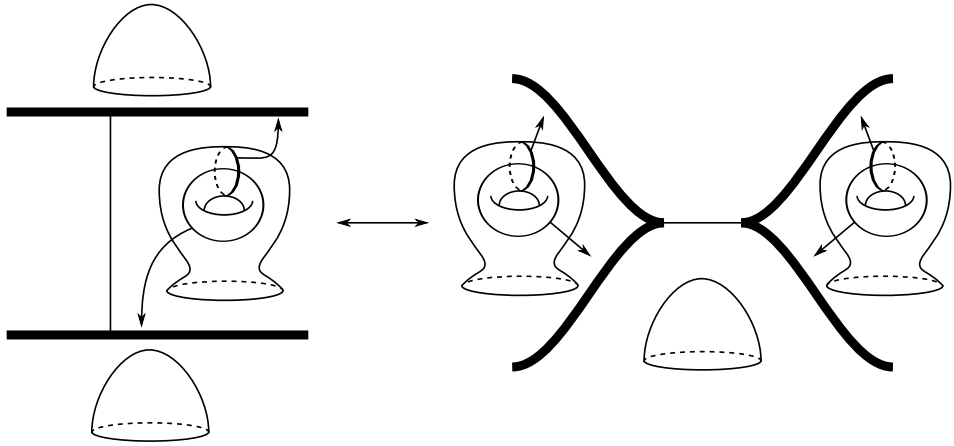}\end{center}\caption{A merging move between two indefinite folds.}\label{merge}\end{figure} The fibration structure for the merge outside of the slice $\{t=0\}$ is as follows: for $t<0$ the base diagram appears as in the left side of Figure \ref{merge}, where the fibers above the central region are punctured tori, and the fibers above the top and bottom regions are disks. Similar to the birth deformation, the fibration structure above an arc that connects the top and bottom regions traces a morse function on the 3-disk with canceling morse critical points of index 1 and 2. In other words, for a merge to occur, the vanishing cycles as obtained by following the vertical arc drawn on the left of the figure in either direction must intersect at a unique point. The fibration structure for $t>0$ includes two cusp points, with the vanishing cycles as shown. Arguably finding more application in the literature is the reverse of this move, which requires a \emph{joining curve} connecting the two cusps (whose image appears as the horizontal arc in the right side of the figure) that intersects the critical locus precisely at the two cusps it joins (see, e.g., 4.4 of \cite{Lev}). Generalizations of the reverse of this deformation to larger and smaller dimensions in the source and target spaces have been used at various times in the past, appearing for example in \cite{Lev}, where Figure 3 has a rather illuminating picture of what is actually happening with the merging move, and others, e.g. \cite{S}.\end{subsubsection}
\begin{subsubsection}{Flip}Here appears a type of critical point involving more than cusp and fold points. Figure \ref{flip} depicts the formation of two cusps in the critical locus, which now has a loop in its image. Unlike the previous two deformations, these cusp arcs meet not at a cusp point but a higher-order critical point, an indefinite swallowtail.
\begin{figure}[ht]\begin{center}\includegraphics{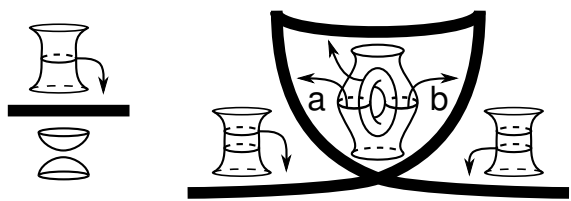}\end{center}\caption{The indefinite flipping move, also known as the result of passing though an indefinite swallowtail.}\label{flip}\end{figure}The deformation begins with an arc of indefinite fold points. After passing the swallowtail point, the fiber above a point within the loop is a twice-punctured torus, with vanishing cycles as shown. The preimage of the point where the two fold arcs intersect is also a twice-punctured torus, except with two nodal singularities obtained by shrinking the two disjoint vanishing cycles labeled $a$ and $b$.\end{subsubsection}
\begin{subsubsection}{Wrinkle}This modification is not prominent in the singularity literature because it involves critical points which are not stable under small perturbation (as shown by the existence of the wrinkling move itself). Thus without loss of generality it is not present in a deformation, though it is an explicit way to modify any broken Lefschetz fibration into a stable map whose critical locus consists of cusps and indefinite folds. It appears in this paper because it is used in Section \ref{apps}. Beginning with the local model for a Lefschetz critical point given by $(z_1,z_2)\mapsto z_1z_2$, the wrinkling homotopy is given by $(z_1,z_2)\mapsto z_1z_2+t\Rp(z_1)$, $t\geq0$. \begin{figure}[ht]\begin{center}\includegraphics{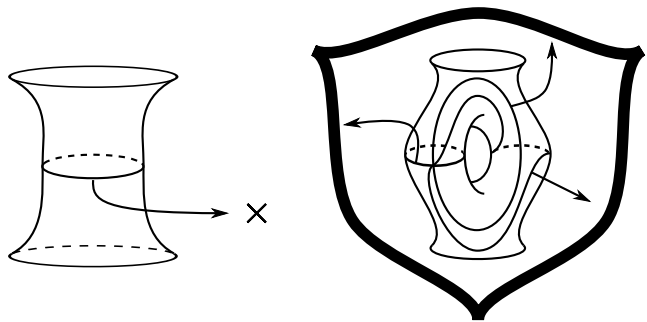}\end{center}\caption{Lekili's wrinkling move, given by an infinitesimal deformation of a Lefschetz critical point.}\label{wrinkle}\end{figure}The picture is largely self-explanatory: beginning with a Lefschetz critical point (which appears at the left as X), a 3-cusped circle of indefinite folds opens up, bounding a region whose fiber has genus higher by 1. It is interesting to note that, traveling in a loop in the annulus-fibered region parallel to the circle of critical points, the Lefschetz monodromy associated to the isolated critical point remains after the perturbation.
\end{subsubsection}
\begin{subsubsection}{Sink}Putting these moves together, it is possible to convert any stable map whose critical locus consists of cusps and indefinite folds into a broken Lefschetz fibration by a $C^0$-small homotopy; a modification which appears in Figure of \cite{L} gives the prescription. For want of a better term, this paper refers to this homotopy as \emph{sinking} a Lefschetz critical point into the fold locus, and to the reverse of this homotopy as \emph{unsinking} a cusp. The vanishing cycles associated with unsinking a cusp have a simple description: suppose the vanishing cycles near a cusp are simple closed curves $a$ and $b$ as in Figure \ref{cusp}, equipped with some (arbitrary) orientation for each. Then the Lefschetz critical point that comes from unsinking is precisely the one whose monodromy sends $a$ to $b$: its vanishing cycle is homotopic to the concatenation $b+a$.\end{subsubsection}
 Those familiar with the subject will notice that most of these moves are slightly different from those appearing in \cite{L} in that they do not involve the formation or disappearance of Lefschetz critical points. As sinking and unsinking are arbitrarily small perturbations, the difference is small enough for the abuse of terminology to be tolerated, assuming one specifies when sinking and unsinking take place in a given homotopy. As Lefschetz critical points are not stable, yet must persist for some open interval in  $t$ if they appear via unsinking, they do not appear in the course of a deformation. Thus a flip, merge or birth, when appearing in a deformation, is implicitly one of the above moves. With this understood, the main theorem has a precise statement:\begin{thm}\label{T}If two broken Lefschetz fibrations are homotopic, then there exists a homotopy between them which is realized as a sequence of modifications (and their inverses) chosen from the following list: birth, merge, flip, sink, wrinkle and isotopy.\end{thm}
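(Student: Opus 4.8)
The key realization is that this is, at heart, a statement about $1$-parameter families. Given two homotopic broken Lefschetz fibrations $f_0, f_1 : M \to S^2$, the homotopy between them is a smooth map $H : M_I \to S^2_I$ respecting the product structure. The first step is to perturb $H$ rel endpoints so that it becomes a deformation (weakly $(1,1)$-stable); by the Wassermann classification and Proposition \ref{defsarestable}, a generic such perturbation has critical locus built entirely from folds, cusps, and swallowtails, with the added caveat that the slices $f_t$ fail to be stable only at finitely many values of $t$, and at those values the failure is modeled by one of the moves catalogued above (birth, merge, flip). The point is that each of these local models is precisely the versal way a fold/cusp/swallowtail picture can change, so once $H$ is a deformation, reading off the finitely many non-generic slices literally produces a sequence of moves from the list. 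The wrinkle and sink moves enter only to handle the Lefschetz critical points in $f_0$ and $f_1$ themselves: since Lefschetz points are unstable, one first wrinkles them away (turning $f_0, f_1$ into honest wrinkled fibrations with only folds and cusps), runs the deformation argument in that world, then unsinks at the end.

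\medskip
\noindent\textbf{Key steps, in order.} First, replace each Lefschetz critical point of $f_0$ and of $f_1$ by a wrinkle, reducing to the case where both endpoints have critical loci consisting only of cusps and indefinite folds; record these wrinklings as the first and last moves of the eventual sequence. Second, take an arbitrary smooth homotopy $H$ and perturb it rel $M_0 \cup M_1$ to a deformation using a standard transversality/jet-genericity argument; this uses the Wassermann stability theory of \cite{W} to guarantee the perturbed $H$ has the prescribed normal forms everywhere. Third, analyze the projection $T : S(H) \to I$ of the critical locus of $H$ to the homotopy parameter, and perturb so that $T$ is Morse on each stratum and its critical values, together with the finitely many $t$ where a slice degenerates, are all distinct. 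Fourth, between consecutive critical values of $T$ the stratified isotopy class of the slice's critical locus is constant, which is exactly an isotopy move; passing through an index-$0$ or index-$2$ critical point of $T$ on a stratum of fold points is a birth or merge (here Propositions \ref{birthcusp} and \ref{mergecusp} identify the degenerate slice as having the cusp required by the local model); passing through a swallowtail point is a flip. Fifth, concatenate: the full homotopy is the wrinklings at the ends, a sequence of births/merges/flips/isotopies in the middle (read off from $T$), and the inverse wrinklings (i.e. sinks together with isotopies) at the far end, giving the claimed decomposition. Definite folds are handled in parallel throughout, using the definite versions of the birth, merge, and cusp models described in Section \ref{icat}.

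\medskip
\noindent\textbf{The main obstacle.} The genuinely delicate point is the second step: showing that an \emph{arbitrary} smooth homotopy can be made into a deformation by a perturbation that fixes the (already nice) endpoints, and that the resulting catalog of local transitions is \emph{exhausted} by birth, merge, and flip. Jet transversality gives genericity in the interior easily enough, but one must check that the endpoint constraint does not force additional, unlisted degenerations near $t = 0, 1$ — in particular that the wrinkled endpoints are themselves stable as slices so that nothing pathological is pinned there — and one must invoke the completeness half of Theorem 4.4 of \cite{L} to know the list of transitions is closed. A secondary subtlety is bookkeeping the distinction between the moves as stated here (which do not create or destroy Lefschetz points) and Lekili's original versions (which do): one must be careful that every place a flip, merge, or birth occurs in the deformation, the "implicitly sinked" interpretation is consistent, so that the final sequence really is built from the six named moves and no Lefschetz points appear mid-homotopy. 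Once these are settled, the remaining verifications — that each slice transition matches a local model, that isotopies fill the gaps — are routine given the machinery already assembled.
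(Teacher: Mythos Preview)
Your outline has a genuine gap that coincides exactly with the entire content of the paper's proof. After you perturb the homotopy to a deformation, the critical surface $S(H)\subset M_I$ will in general contain \emph{definite} folds, definite cusps, and definite swallowtails; reading off the transitions then produces definite births, definite merges, and IDI/DID swallowtails, none of which are on the list in Theorem~\ref{T}. Your final sentence, ``Definite folds are handled in parallel throughout, using the definite versions of the birth, merge, and cusp models described in Section~\ref{icat},'' is precisely where the argument fails: those definite models live in Section~\ref{dcat}, not \ref{icat}, and they are not among the six allowed moves. The theorem is asserting that one can always find a homotopy realized \emph{without} ever using a definite move, and that is not automatic from genericity.

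The paper's proof is therefore organized entirely around eliminating $S_+(H)$ from the deformation while keeping the endpoints fixed. This takes three nontrivial steps: Lemma~\ref{defswallowtails} replaces each definite swallowtail by an explicit local deformation ($g_{idi}$ or $g_{did}$) built only from indefinite moves, so that $\overline{S_+}$ becomes a surface with cusp-circle boundary; the Cutting Lemma~\ref{cut} and Simplification Lemma~\ref{simplemma} surger $S_+$ along forward arcs until each component is an admissible embedded disk; and Section~\ref{removal} removes each such disk by introducing canceling flip pairs and a one-parameter family of inverse merges. Only after $S_+$ is empty does the deformation decompose into the moves of Section~\ref{icat} as you describe. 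What you identified as ``the main obstacle'' (relative genericity near the endpoints and completeness of the transition list) is comparatively routine and is dispatched by the Wassermann/Lekili classification; the actual work lies in the definite-fold elimination you waved away.
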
\end{subsection}
\begin{subsection}{Catalog of moves involving definite folds}\label{dcat}With some understanding of the moves and local models above, it is straightforward to deduce their definite counterparts.
\begin{subsubsection}{Definite birth}Given locally by the model\begin{equation}\label{eq:defbirtheq}(t,x_1,x_2,x_3,x_4)\rightarrow(t,x_1,x_2^3+3(x_1^2-t)x_2+x_3^2+x_4^2),\end{equation}the modification in Figure\begin{figure}\begin{center}\includegraphics{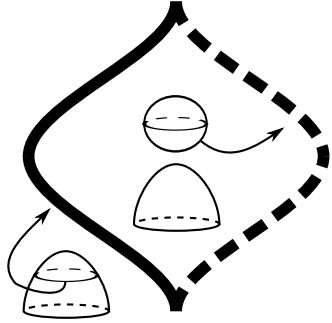}\end{center}\caption{The definite birth move. (defbirth)}\label{defbirth}\end{figure} \ref{defbirth} adds a nullhomologous sphere component to the fiber for points on the interior of the circle. Outside the circle, the fibers are disks. Traveling from left to right along the middle of the figure, one encounters a nullhomotopic vanishing cycle which pinches off a sphere component. Continuing to the right, that same sphere shrinks to a point and disappears. By an argument which is almost verbatim that of Proposition \ref{birthcusp}, the critical locus in $D^4_{[-\epsilon,\epsilon]}$ is a disk bisected by cusps; the only difference is that one side of this disk is swept out by definite folds, the other by indefinite folds.\end{subsubsection}
\begin{subsubsection}{Definite merge}The deformation corresponding to the merging of an arc of definite folds into an arc of indefinite folds (Figure \ref{defmerge}) \begin{figure}[ht]\begin{center}\includegraphics[width=\linewidth]{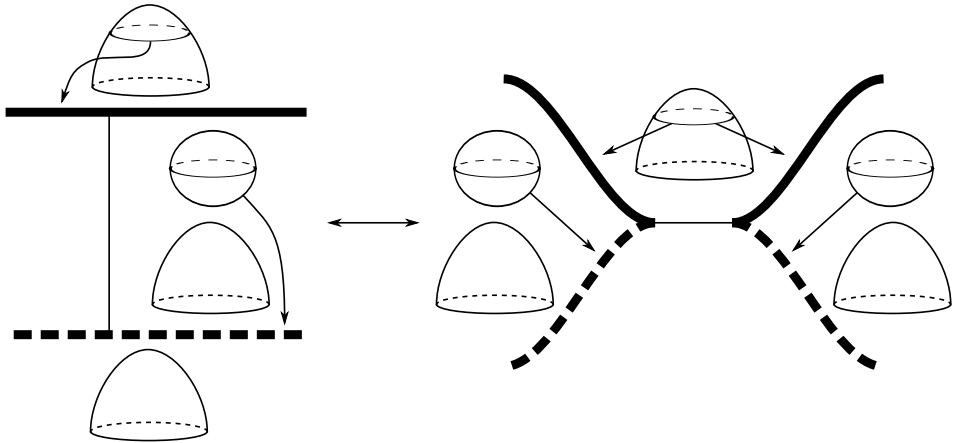}\end{center}\caption{A merging move involving a definite fold.}\label{defmerge}\end{figure} has following local model:\begin{equation}\label{eq:defmergeeq}(t,x_1,x_2,x_3,x_4)\mapsto(t,x_1,x_2^3+3(t-x_1^2)x_2+x_3^2+x_4^2).\end{equation}Like its indefinite counterpart, it corresponds to an index 1 morse critical point, taking $t$ as a morse function $S\rightarrow[0,1]$. It appears as a saddle $S=\{x_1^2=x2^2=t, x_3=x_4=0\}$ in $D^4_{[-\epsilon,\epsilon]}$, and by an argument analogous to that of Proposition \ref{mergecusp}, a definite cusp arc along the slice $\{x_2=0\}\subset S$. Finally, an arc between a definite fold point and an indefinite fold point, or between two definite cusps, signals a definite merge in the same way as in the indefinite case. Note that the vanishing cycles for the two indefinite arcs on the right side are the same: this reflects the fact that they must be nullhomotopic curves in the same fiber component. Lastly, a joining curve for inverse merge must also satisfy the obvious compatibility requirement where the indefinite arcs patch together and the definite arcs patch together in the resulting base diagram.\end{subsubsection}
\begin{subsubsection}{Definite swallowtails}There are two swallowtails that involve definite folds in their local models. The first occurs as in Figure \ref{idi} beginning with an arc of indefinite fold points. The loop that forms is on the opposite side of the arc than the loop that forms by a flip, and the parallel vanishing cycles introduce a nullhomologous sphere component in the fiber. Traveling along the loop of critical points that results from passing through a swallowtail of this type, one can read off the type of fold as indefinite, definite, then indefinite. For this reason these swallowtails are called IDI definite swallowtails.
\begin{figure}[ht]\centering\begin{minipage}[t]{0.5\linewidth}\centering\includegraphics{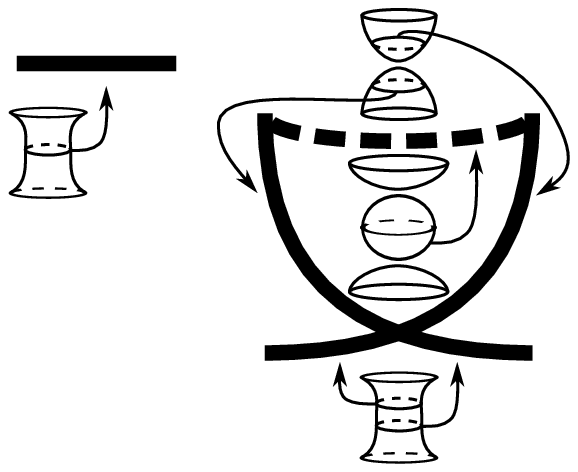}\caption{The definite swallowtail as it appears on an indefinite fold.} \label{idi}\end{minipage}\begin{minipage}[t]{0.5\linewidth}\centering\includegraphics{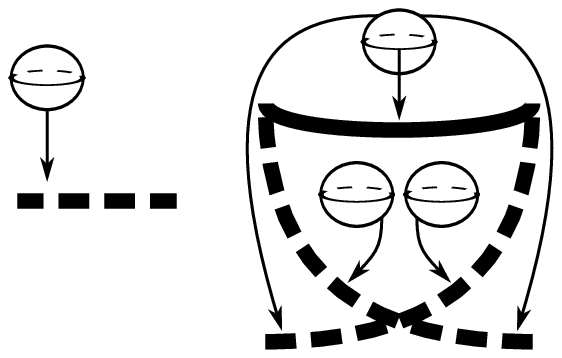}\caption{The definite swallowtail as it appears on a definite fold.} \label{did}\end{minipage}\end{figure} The other definite swallowtail appears in Figure \ref{did}, where the fiber is empty at the bottom of both sides of the diagram. It begins with an arc of definite fold points and opens upward into the region fibered by spheres, introducing another sphere component in the fiber. Each sphere component is necessarily nullhomologous as it dies at some definite fold arc. Following the pattern above, these swallowtail points will be called DID definite swallowtails. In this paper, these two critical points are consistently referred to as definite swallowtails, while the term ``flip" is reserved for the indefinite version appearing in Section \ref{icat}.\end{subsubsection}\end{subsection}
\begin{subsection}{The critical manifold of a deformation}\label{criticalmanifold}Collecting these ideas, the critical locus of a deformation $f_t:M_I\rightarrow S^2_I$ is a smooth surface $S$ properly embedded into $M_I$ with a stratification into $0$, $1$, and $2$-dimensional submanifolds, each contained in the boundary of the next, and the critical locus of the restriction of a deformation to the closure of any stratum is the union of the lower-dimensional strata.
	\begin{df}A stable map from a smooth 4-manifold to a surface whose critical locus consists of indefinite folds and cusps is called a \emph{wrinkled fibration}. When a deformation has empty definite fold locus, it is called a \emph{deformation of wrinkled fibrations}.\end{df}
	The stability of a deformation implies that one may take the projection $T:\crit(f)\rightarrow I$ as a morse function whose index 0 and 2 critical points correspond to definite and indefinite births, while the index 1 critical points correspond to definite and indefinite merging moves. Other than these, the only points at which a deformation passes through an unstable map are the swallowtail points. Since these moves and isotopies are obviously all deformations and any deformation has a corresponding family of base diagrams, the converse statement that a map whose critical manifold obeys these constraints is necessarily a deformation also holds; that is, since its critical locus is of the appropriate form, any 1-parameter family of base diagrams given by a sequence of isotopies and the above moves must correspond to some deformation.\\

A convenient depiction of the critical locus $S$ of a deformation $f$ is as a surface colored according to critical point type. The restriction $g=f|_S$ is locally a continuous embedding (in fact an immersion away from $\crit(g)$, which in our case consists of the cusp and swallowtail points), so a further piece of data is the set of points in $D\subset S$ on which $g$ fails to be injective. Like any immersion of a curve into a surface, the critical images $f_t|_S\subset S^2$ evolve continuously with $t$ in the manner prescribed by the Reidemeister moves, where $f^{-1}(D)$ appears as a union of copies of the diagrams in Figure \begin{figure}[ht]\begin{center}\includegraphics[width=\linewidth]{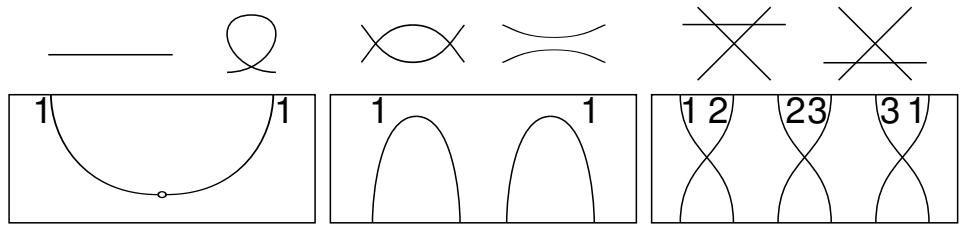}\end{center}\caption{The preimages of intersection points for Reidemeister moves 1, 2 and 3 in the critical locus of a deformation, where the parameter $t$ points either upward or downward.}\label{reid}\end{figure}\ref{reid} and their reflections in $t$ (in the figure, two arcs with the same number have a common image under the deformation and may appear in different components of $S$). More precisely, looking at Figure \ref{reid} as a collection of rigid tangles, $D$ appears as a concatenation in which the components of two tangles may be joined by arcs which project diffeomorphically to the $t$-axis, and such that the numbering is consistent. The following lemma will be important in Section \ref{simp}:
\begin{lemma}\label{r1}For a deformation, the double-point locus given by the two half-open arcs appearing in Figure \ref{reid} corresponding to the Reidemeister-1 move necessarily have a common limit point $p\in\crit(f)$ which is a (definite or indefinite) swallowtail point.\end{lemma}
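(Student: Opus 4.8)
The plan is to trace the Reidemeister-1 tangle back to the one place in the catalog where a single transverse self-crossing of the critical image is born — a swallowtail — and to rule out every other possibility using the local models. Write $g=f|_S$ for the restriction of the deformation to its critical surface $S=\crit(f)$ and let $D\subset S$ be its double-point locus. The Reidemeister-1 tangle consists of two half-open arcs $\alpha,\beta\subset D$ with a common image $\gamma=g(\alpha)=g(\beta)$, a half-open arc in $S^2_I$ swept out by the unique transverse self-intersection point of the slice critical images $f_t|_{S_t}$ (where $S_t=S\cap M_t$) as $t$ ranges over a half-open interval with endpoint $t_0$. I want to identify the limit of this self-crossing as $t\to t_0$, pull it back to $S$, and recognize it as a swallowtail.

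First I would show that $\alpha$ and $\beta$ have a common limit point $p\in S$ at which $f_{t_0}|_{S_{t_0}}$ fails to be an immersion. Since $S$ is properly embedded in the compact manifold $M_I$, each arc has a limit point; since $D$ is, by the discussion preceding the lemma, a concatenation of the rigid tangles of Figure \ref{reid} (a half-open arc of the Reidemeister-1 tangle leaves one end open precisely to be joined to other such tangles), the fact that $\alpha$ and $\beta$ terminate at $t_0$ rather than continuing can only be accounted for by the loop bounded by the self-crossing contracting to a point — a persistent self-crossing, or a self-crossing that merges with another feature, would prolong the tangle rather than end it. Comparing with the model family $y^2=x^3+sx^2$, in which a node degenerates to a semicubical cusp, this forces $\alpha$ and $\beta$ to converge to a single point $p\in S_{t_0}$ and forces $f_{t_0}|_{S_{t_0}}$ to have a cusp at $p$. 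Because the critical image of a slice is a smoothly embedded arc near every fold point — the local model \ref{eq:fold2} has critical locus transverse to the slices, with embedded image — the point $p$ must lie in $S^{(3,1)}f$.

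The main step is then to show $p\in S^{(3,1,1)}f$, that is, that $p$ is a swallowtail. Suppose instead $p$ is a cusp point that is not a swallowtail. By the classification of the critical points of a deformation (the local models of Section \ref{calc} together with Theorem 4.4 of \cite{L} and Propositions \ref{birthcusp} and \ref{mergecusp}), such a point is either a regular point of $T$ restricted to the cusp locus, at which the deformation is, up to $(1,1)$-equivalence, the product cusp $(t,x_1,x_2,x_3,x_4)\mapsto(t,x_1,x_2^3-3x_1x_2+x_3^2\pm x_4^2)$, or it is an (in)definite birth (\ref{eq:birtheq}, \ref{eq:defbirtheq}) or (in)definite merge (\ref{eq:mergeeq}, \ref{eq:defmergeeq}), and hence an index $0$, $1$, or $2$ critical point of $T|_S$. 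In each case I would compute the critical locus from the explicit polynomial and read off that, for every fixed $t$, the slice critical image near $p$ is free of self-intersections: it is the $t$-independent semicubical cusp $x_2\mapsto(x_2^2,-2x_2^3)$ for the product cusp, and for the births and merges it is empty, an embedded arc, or an embedded closed curve with two cusps (the ``lens'' of Figures \ref{birth}, \ref{merge}, \ref{defbirth}, \ref{defmerge}), as follows from the parametrizations used in the proofs of Propositions \ref{birthcusp} and \ref{mergecusp}. Hence no Reidemeister-1 move can be centered at such a $p$, a contradiction. Therefore $p$ is a swallowtail, and it is the required common limit point; this is exactly what one sees in the local models of the flip and the two definite swallowtails (Figures \ref{flip}, \ref{idi}, \ref{did}, equation \ref{eq:swallowtail}), where a single transverse self-crossing is born together with the two new cusps.

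The hard part is the case analysis of the third paragraph. One must be confident that the classification really supplies only those local models near a non-swallowtail point of $S^{(3,1)}f$, and then verify ``no self-intersection'' for the births and merges, whose slice images are visibly cuspidal but not obviously embedded; this is where the explicit equations, rather than the base diagrams alone, are needed. A lesser but still essential point is the dichotomy in the second paragraph, namely that the open end of a Reidemeister-1 arc is its only way to interact with the rest of $D$, so its closed end can only arise from a vanishing loop and not from the self-crossing colliding with a cusp; this relies on the combinatorial description of $D$ as a concatenation of the tangles of Figure \ref{reid}.
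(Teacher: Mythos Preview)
Your proof is correct and follows essentially the same strategy as the paper's: locate the limit point $p$ in the Boardman stratum $S^{(3,1)}$, then rule out the cusp case by inspecting local models and observing that the critical image is embedded there.

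The paper's execution is considerably shorter, and the difference is worth noting. Rather than separately excluding folds and then running a case analysis over the $(1,1)$-models (product cusp, birth, merge), the paper views $g=f|_{S}$ as a map from a surface into $\R^3$ and observes that $p$ is a \emph{branch point} of this map---a generic singularity of surface-to-$3$-manifold maps---hence $p\in\crit(g)$, which by the Boardman stratification is exactly $S^{(3,1)}$. It then rules out \emph{all} non-swallowtail points of $S^{(3,1)}$ in one stroke by appealing to the single $5$-to-$3$ cusp local model: since Propositions~\ref{birthcusp} and~\ref{mergecusp} show that birth and merge points are already cusp points of the $5$-to-$3$ map, every point of $S^{(3,1,0)}$ has that one local model, and the computation $(t,x_2)\mapsto(t,x_2^2,-2x_2^3)$ shows $g$ is injective there. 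Your case analysis reproduces this same conclusion three times over. The upshot is that your extra care in the second paragraph (existence and uniqueness of the limit, exclusion of folds) and your explicit checks for births and merges are sound but redundant once one works at the level of the ambient $5$-to-$3$ stable map rather than slice by slice.
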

\begin{proof}Restricting the deformation to the critical locus in a neighborhood of $p$, it becomes a map from a surface into $\R^3$. The point $p$ is precisely the point at which the move occurs, and is a branch point of the map, a generic singularity of such maps. In other words, $p$ is a critical point of $f|_{\crit(f)}$ and therefore it must be a cusp or swallowtail point of the deformation, according to the Boardman stratification. Since the local model for cusps does not involve an immersion of the critical locus, it must be a swallowtail.\end{proof}\end{subsection}\end{section}

\begin{section}{Proof of the main theorem}\label{pf}As discussed above, if a deformation has empty definite fold locus, then it is possible to connect its endpoints by a sequence of the moves of Section \ref{icat}. Thus one approach is to begin with an arbitrary homotopy between broken Lefschetz fibrations, perturb it to be $(1,1)$-stable, and remove the definite fold locus while preserving the deformation condition. Note that it is not necessary that the modification itself be realized by a homotopy: all that is required for Theorem \ref{T} is an existence result for a deformation of wrinkled fibrations, regardless of the homotopy class of the deformation map $f$. If these modifications were realizable as a sequence of homotopies of $f$, one could easily obtain an ``elimination of definite fold" result analogous to \cite{S} for maps from 5-manifolds to 3-manifolds, as discussed in Section \ref{apps}. The modification begins by removing the definite swallowtails. Inspired by a paper of Ando \cite{An1} which indicates that swallowtails of all kinds cancel in pairs, the first step is to in some sense replace all definite swallowtails by flips so that the closure of the definite fold locus becomes a surface with nonempty boundary consisting of a union of cusp circles. In Section \ref{simp}, a kind of surgery on this surface renders a deformation such that $S_+$ becomes a union of disks, on each of which the deformation is an embedding which is as simple as possible. The last step is a trick to cause these disks to appear in definite births which are trivial in a way that allows them to be omitted. The following examples will be used in the argument, and are typical of the kind of manipulations that form the heart of this paper.
\begin{ex}\label{birthmerge}The result of passing through a pair of swallowtails can also occur by choosing a particular birth, then merging one of its fold arcs with a preexisting fold arc as in Figure \ref{bmpic}.
\begin{figure}[ht]\begin{center}\includegraphics[width=\linewidth]{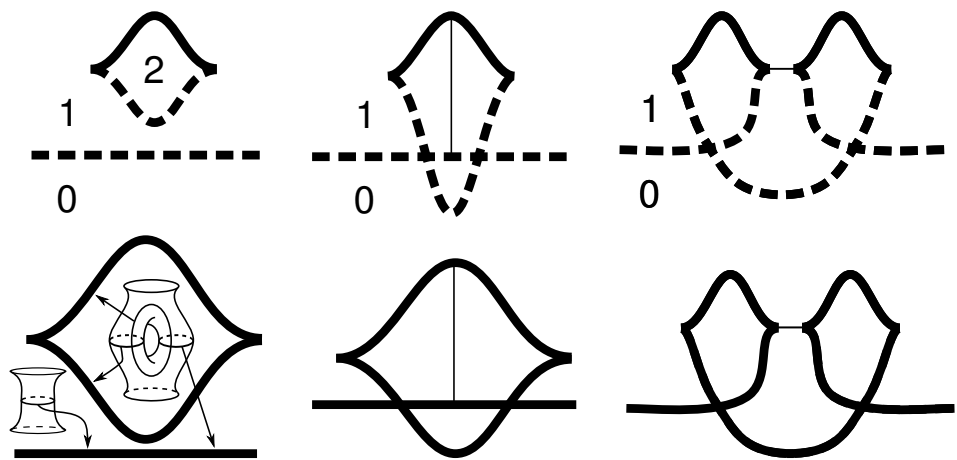}\end{center}\caption{Two swallowtails have the same effect as a birth followed by a merge. There are two definite versions (one of which appears at the top) and an indefinite version (bottom).}\label{bmpic}\end{figure} 
It is helpful to follow the course of the deformation both from left to right and in reverse. From left to right, it begins with the result of a birth which is modified by an isotopy followed by a merge. From right to left, a fold arc appears as it would just after the deformation has passed through two swallowtails. Then there is an inverse merge as indicated, and an isotopy. In some sense, the loops (and thus the swallowtail critical points not shown) at the right might be viewed as defining the location at which the births at the left occur in the larger fibration, even though (assuming the total space is connected) any pair of births are related by isotopy because they are homotopies supported on 4-balls in $M$.\end{ex}
\begin{ex}\label{tradex}This example is due to David Gay, who employed a clever use of the modification that appears in Figures 5 of \cite{B2} and 11 of \cite{L}. Employing Example \ref{birthmerge}, the author has slightly changed its presentation to avoid swallowtails, and it appears in the proof of the main theorem more than once. The effect is to switch between definite and indefinite circles by a deformation $(D^3\times S^1)_I\rightarrow D^2_I$ as shown in Figure \ref{trade}.
\begin{figure}[ht]\begin{center}\includegraphics[width=\linewidth]{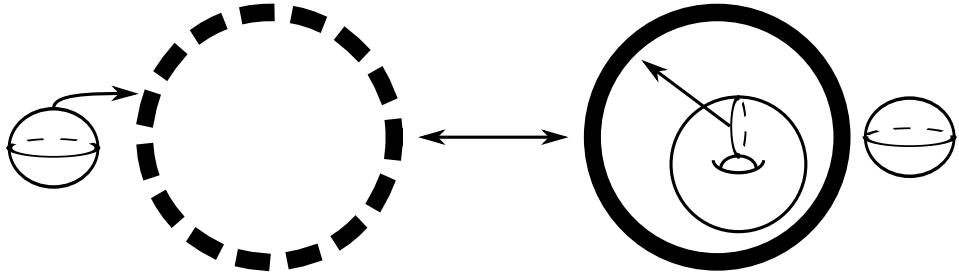}\end{center}\caption{Interchanging definite and indefinite circles.}\label{trade}\end{figure}
	Here the base diagram on the left has integers indicating the number of sphere components of the fiber. Within the circle of definite folds, the point preimages are empty, and outside of the circle the fiber consists of a 2-sphere. At the right, the critical locus consists of a circle of indefinite fold points. Inside the circle the fiber is a torus, and passing outside this circle an essential curve shrinks as the vanishing cycle, resulting in a fibration by spheres. The intermediate steps appear in Figure \ref{tradepf},
\begin{figure}[ht]\begin{center}\includegraphics[width=\linewidth]{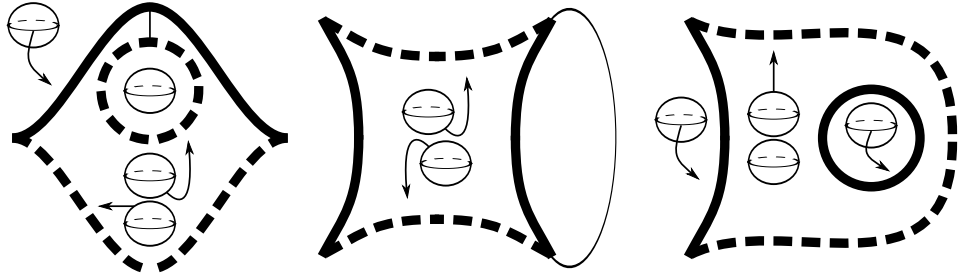}\end{center}\caption{Switching between definite and indefinite circles.}\label{tradepf}\end{figure}
	in which a definite birth as in the previous example has taken place outside the definite circle, followed by an isotopy in which the newly introduced region of sphere fibers expands to result in the left side of the figure. Performing the merge (which is the one from Example \ref{birthmerge}) and then an inverse merge as indicated gives the right side of the figure. Here, the indefinite circle may be pushed past the other indefinite arc in an isotopy such that each spherical fiber above the region it bounds experiences surgery on a pair of points as the other indefinite arc passes by, resulting in torus fibers. Removing the cusped circle by an inverse definite birth finally results in the right side of Figure \ref{trade}.\end{ex}
\begin{rmk}It is interesting to note that there is something subtle going on with these modifications. Closing off the boundary fibration in Example \ref{tradex} with a copy of $S^2\times D^2$ results in a family of fibrations of the 4-sphere, where it is known that the gluing data associated with the fibration uses a nontrivial loop of diffeomorphisms of the torus (in the language of \cite{B1}, the round handle corresponding to the indefinite circle is \emph{odd}), and it would be interesting to clarify how and when this manifests itself in the above deformation.\end{rmk}
\begin{rmk}As the modification of Example \ref{tradex} is entirely local around a definite fold, it seems likely that it could lead to an alternate proof of Theorem 2.6 of \cite{S} in the special case of 4-manifolds, and yet another slightly novel existence result for broken Lefschetz fibrations, but this is beyond the scope of this paper.\end{rmk}
\begin{subsection}{Removing definite swallowtails}\label{swallowtails} The definite swallowtails are precisely those isolated points in the lowest-dimensional stratum which are adjacent to definite folds. It is clearly necessary to remove them, as their local models involve definite folds. The recurring theme in this paper is a kind of surgery on the map itself; that is, the map and the fibration structure it induces are equivalent data, and performing a fibered surgery in which some fibered subset of a manifold is replaced by a diffeomorphic subset with a different fibration structure on its interior, yet an isomorphic fibration structure on its boundary can be viewed as a modification of the map that preserves the manifolds involved. The first instance of this is as follows. 
\begin{lemma}\label{defswallowtails}Suppose $f:M_I\rightarrow F_I$ is a deformation. Then there exists a deformation $f':M_I\rightarrow F_I$ such that $ f_i=f'_i$, $i=0,1$, and such that $S^{(3,1,1)}(f')$ consists entirely of indefinite swallowtails.\end{lemma}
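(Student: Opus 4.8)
The plan is to remove the definite swallowtails one at a time by a purely local modification of $f$ near each, trading each definite swallowtail for a flip (indefinite swallowtail) together with a birth and a merge, none of the latter two carrying any swallowtail points. First I would put $f$ in general position with respect to the Morse function $T=\pi_I\circ f|_{\crit f}\colon\crit f\to I$: after an isotopy of $f$, the finitely many swallowtail points occur at pairwise distinct values of $t$, none of which is a critical value of $T$ (so that births and merges, which are the index $0,2$ and index $1$ critical points of $T$, occur at other times), and — since the endpoints are broken Lefschetz fibrations — no swallowtail occurs at $t=0$ or $t=1$. All the moves of Sections \ref{icat} and \ref{dcat} are supported in small balls in $M_I$ disjoint from $M_0\cup M_1$, so it suffices to treat a single definite swallowtail $p$ occurring at time $t_0$; I will alter $f$ only on a thin slab $M\times(t_0-\delta,t_0+\delta)$, leaving it unchanged outside.

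The key step is to manufacture a cancelling flip adjacent to $p$ and then invoke Example \ref{birthmerge}. Just after $t_0$ the critical image near $p$ contains a cusped loop which, by the descriptions of the DID and IDI swallowtails in Section \ref{dcat}, contains at least one indefinite fold arc. On such an arc, at time $t_0+\delta/2$, I insert a flip immediately followed by its inverse; this sub-deformation is supported in a ball, its net effect is trivial, and it changes neither the endpoints nor the smooth type of $f$. We have thereby arranged that near $p$ the deformation passes, in immediate succession, through the definite swallowtail $p$ and a flip $q$ sitting on an indefinite arc of the loop $p$ creates. After an isotopy bringing $\{p,q\}$ into the normal form of Figure \ref{bmpic}, Example \ref{birthmerge} identifies the sub-deformation ``$p$ then $q$'' — a passage through a pair of swallowtails — with ``a (definite) birth followed by a merge,'' namely the appropriate definite version in that figure. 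Replacing ``$p$ then $q$'' by ``birth then merge'' produces a deformation agreeing with $f$ off the slab in which $p$ has been replaced by a birth and a merge (both swallowtail-free) and the single leftover inverse flip, which is an indefinite swallowtail. Iterating over all the definite swallowtails gives $f'$ with $f'_i=f_i$ for $i=0,1$ and $S^{(3,1,1)}(f')$ consisting only of indefinite swallowtails. (New definite fold and cusp arcs may be introduced by the definite births, but that is immaterial here; they are dealt with later, as outlined before Section \ref{simp}.)

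The main obstacle is the type-checking in the replacement step: one must verify that a definite swallowtail together with a suitably chosen nearby flip really does match one of the two definite configurations of Figure \ref{bmpic} — that is, that the vanishing cycles, including the nullhomologous sphere components contributed by the definite arc, and the cusp adjacencies all line up after isotopy — and that the freedom to choose which indefinite arc of the loop carries $q$ (and which flip is used there) is enough to realize both the DID and the IDI cases. This is, in effect, the four-dimensional manifestation of Ando's observation \cite{An1} that swallowtails of all kinds cancel in pairs. A secondary point to check is that the ball supporting the cancelling flip–inverse-flip pair can be placed disjointly from the region affected by the subsequent birth–merge replacement, so that the two local modifications do not interfere; alternatively, one could bypass the cancelling-flip device entirely by using the definite/indefinite circle exchange of Example \ref{tradex} localized near the definite arc of the loop, but the type bookkeeping is no easier there.
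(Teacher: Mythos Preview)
There is a genuine gap at the replacement step. Example~\ref{birthmerge} asserts that passing through a pair of swallowtails \emph{of the same type} on a common parent arc has the same effect as a birth followed by a merge: the indefinite version trades two flips for an indefinite birth and an indefinite merge, and the two definite versions trade two IDI swallowtails (respectively two DID swallowtails) for a definite birth and a definite merge. Your argument requires the example to apply to a \emph{mixed} pair --- one definite swallowtail $p$ and one flip $q$ --- and that is not among the three versions the example provides. You flag this ``type-checking'' as the main obstacle without carrying it out, and the obstacle is real. A cusp-type count already exhibits the mismatch: your configuration carries two definite cusps (from $p$) and two indefinite cusps (from $q$), whereas each definite version of Example~\ref{birthmerge} yields four definite cusps. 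In the IDI case one might hope instead for a definite birth followed by an \emph{indefinite} merge of its indefinite arc with the indefinite parent, which would at least give the correct cusp types; but the indefinite arc of a definite birth has a nullhomotopic vanishing cycle, which bounds a disk in the fiber and therefore cannot meet the parent's vanishing cycle transversely in exactly one point --- so that merge is unavailable. Thus no birth/merge combination realizes your IDI configuration, and the substitution via Example~\ref{birthmerge} cannot be completed.

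The paper's proof proceeds along quite different lines. Rather than pair each definite swallowtail with an auxiliary flip, it constructs by hand two replacement deformations $g_{idi}$ and $g_{did}$, each having the same boundary fibration as a small neighborhood of the corresponding definite swallowtail but containing only indefinite swallowtails in its interior. These are long explicit sequences of births, merges, flips and carefully justified isotopies, tracked through base diagrams with the vanishing cycles recorded at each stage (Figures~\ref{idi1}--\ref{idi3} and~\ref{did1}--\ref{did3}); the DID replacement even invokes the IDI replacement as a subroutine. Substituting these for each definite swallowtail yields $f'$. The Ando-style cancellation philosophy you invoke is indeed what motivates the lemma, but the paper implements it by explicit base-diagram construction rather than by appeal to Example~\ref{birthmerge}.
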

\begin{proof}The IDI definite swallowtail occurs as a single critical point whose local model appears in Figure \ref{idi}. As described above, the tactic is to give a deformation $g_{idi}$ with the same boundary fibration, but such that $S^{(3,1,1)}(g_{idi})$ is free of definite swallowtails. Removing a neighborhood of a definite swallowtail and gluing in this map causes the deformation to go through these steps instead of a definite swallowtail. \begin{subsubsection}{IDI-type swallowtails}For the purposes of exposition, it seems more natural to present the deformation in reverse: the description begins (and $g_{idi}$ ends) with the right side of Figure \ref{idi}. 
\begin{figure}[ht]\begin{center}\includegraphics[width=\linewidth]{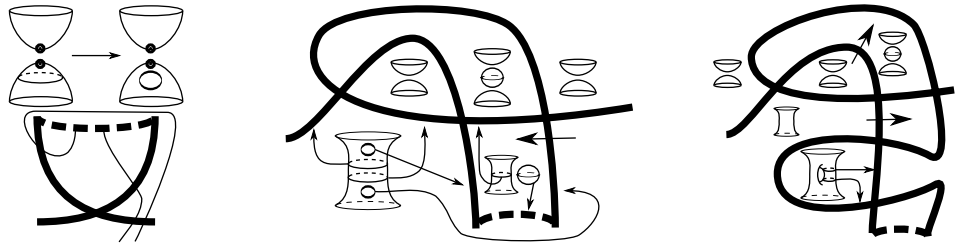}\end{center}\caption{Describing the deformation $g_{idi}$ (part 1).}\label{idi1}\end{figure}To begin, the left side of Figure \ref{idi1} has two arcs that pass through the images of various fold points. The family of point preimages over each of these arcs can be described by the decorated disks at the top left of the picture as follows. One either arc, choose a point in the region fibered by pairs of disks. Following the arc toward the definite fold endpoint, a vanishing cycle which appears as a circle on one of the disks shrinks to a point, pinching off a sphere component from that disk (going along the other arc, the vanishing cycle appears on the other disk), which then shrinks to a point at the definite fold. Going in the other direction, the two disks connect sum at a pair of points to form cylindrical fibers. The first step is an isotopy in which the vanishing cycle moves over to bound a disk in the fiber that does not contain the connect sum points. Each of the arcs can be interpreted as the image of a morse function $D^3\rightarrow I$ and it is a useful exercise to envision how this isotopy is reflected by pushing around a 1-parameter family of balls in $\R^3$. This interpretation involves three copies $B_1,B_2,B_3\cong D^3$ connected by 1-handles $B_1$ to $B_2$ and $B_2$ to $B_3$, modified by a handleslide that slides a foot of the $(B_2,B_3)$ handle over the other 1-handle to lie on $B_1$ (for the other arc, simply switch the indices 1 and 3 in the description of the handleslide). This isotopy is supported away from the intersection point of the two indefinite arcs. With this understood, one may perform another isotopy to obtain the middle of the figure, in which the indefinite arcs adjacent to the definite arc have vanishing cycles that bound disks in the cylindrical fiber. In other words, leaving the region fibered by $S^2\sqcup S^1\times I$ by crossing one of the indefinite arcs on either side results in a connect sum between the sphere and the cylinder. A further isotopy is indicated in the middle picture by a large arrow to obtain the right side of the figure, where the vanishing cycles on the twice-punctured torus fiber come from connect summing the sphere and the cylinder fiber two times. A further isotopy is indicated on the right side of Figure \ref{idi1} by large arrows, which can be interpreted as an R3 move followed by an R2 move.
\begin{figure}[ht]\begin{center}\includegraphics[width=\linewidth]{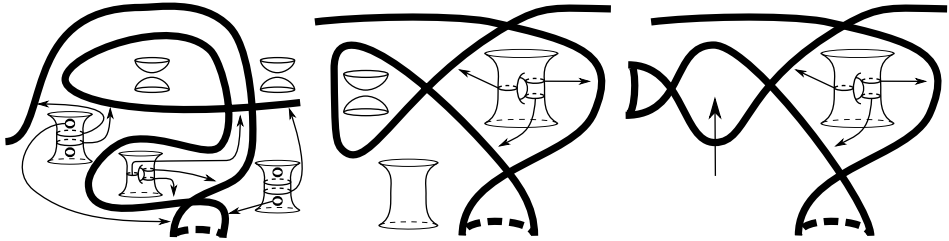}\end{center}\caption{Describing the deformation $g_{idi}$ (part 2).}\label{idi2}\end{figure} Having performed the isotopy, most of the vanishing cycles appearing on the left side of Figure \ref{idi2} are easy to deduce from those of the previous picture. The new vanishing cycle appearing on the twice-punctured torus fiber can be explained by the fact that it is obtained from the adjacent cylindrical fiber by self-connect sum, and the vanishing cycle going around the cylinder survives into the higher-genus fiber by continuity. The middle of Figure \ref{idi2} is just a slightly simpler version of the previous one, obtained by straightening out some of the fold arcs by an ambient isotopy of the base diagram, and a flip results in the right side, where a large arrow indicates an isotopy, an R2 move that removes two intersection points, resulting in the left side of Figure \ref{idi3}. 
\begin{figure}[ht]\begin{center}\includegraphics[width=\linewidth]{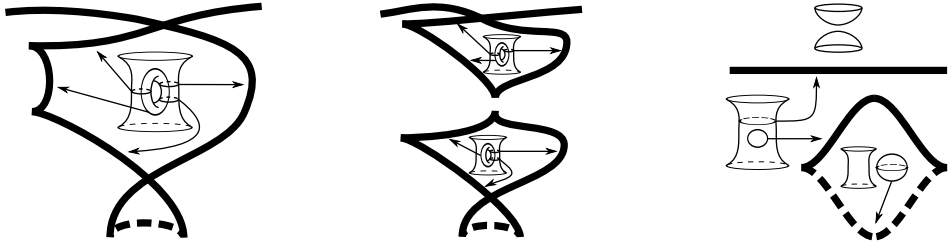}\end{center}\caption{Describing the deformation $g_{idi}$ (part 3).}\label{idi3}\end{figure}The vanishing cycles here may be deduced from the previous image by the fact that the twice-punctured torus fiber is obtained from the $D^2\sqcup D^2$ fiber by connect summing two times as indicated by the vanishing cycles appearing in the right side of Figure \ref{idi2}, where the isotopy is seen sweeping the two indefinite arcs over each point in the $D^2\sqcup D^2$ region. In the left side of Figure \ref{idi3}, the vanishing cycles at the left and right of the twice-punctured torus region intersect transversely at a unique point, and merging along an arc that connects these indefinite folds results in the middle of the figure, where the cusped loops may be removed by inverse flips. The reader will notice that the vanishing cycles within the two loops are different, so there is a question of whether both loops can be removed by inverse flips. Given an indefinite fold arc (call it the ``parent arc" for now), performing a flip changes the fiber on the side with lower Euler characteristic by replacing a fiberwise cylinder neighborhood of the vanishing cycle with the decorated genus-1 surface pictured in Figure \ref{flip}. With this in mind, the two parent arcs in the middle of Figure \ref{idi3} have different vanishing cycles: one is a meridian of the cylinder while the other bounds a disk, leading to their differing appearance. Reversing these flips gives the right side of Figure \ref{idi3}, where the fold circle is easily removed by inverse definite birth. This deformation, in reverse, is called $g_{idi}$, and may be glued in or substituted for a definite IDI-type swallowtail.
Thus the IDI-type definite swallowtails may be removed from any deformation.\end{subsubsection}
\begin{subsubsection}{DID-type swallowtails}For this type of critical point, the map $g_{did}$ does not need to be presented in reverse, and is most naturally presented beginning with the left side of Figure \ref{did}, where a definite birth results in the left side of Figure \ref{did1}. This deformation is easier to describe because the fibers are all spheres and IDI-type swallowtails are freely available for use. 
\begin{figure}[ht]\begin{center}\includegraphics[width=\linewidth]{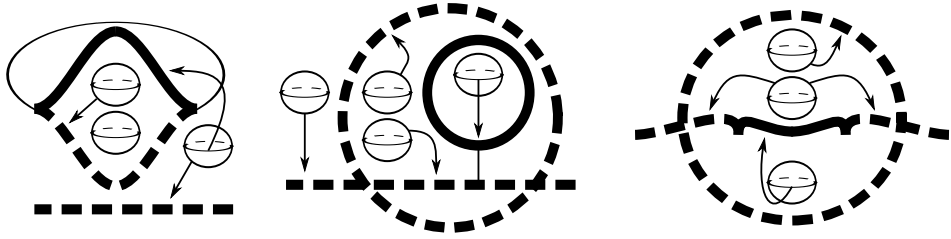}\end{center}\caption{The beginning of the deformation $g_{did}$.}\label{did1}\end{figure}
Performing an inverse merge as indicated followed by an isotopy gives the second diagram of Figure \ref{did1}, and note the merge indicated there, which results in the right side of the figure. Another way to get this same picture is to use Example \ref{birthmerge} followed by inverse merging the two outside cusps in the right of Figure \ref{bmpic}. The left side of Figure \ref{did2} results from an application of the previous argument, performing the deformation of $g_{idi}$ in a neighborhood of an indefinite fold point, with the same effect as passing though an IDI-type swallowtail. 
\begin{figure}[ht]\begin{center}\includegraphics[width=\linewidth]{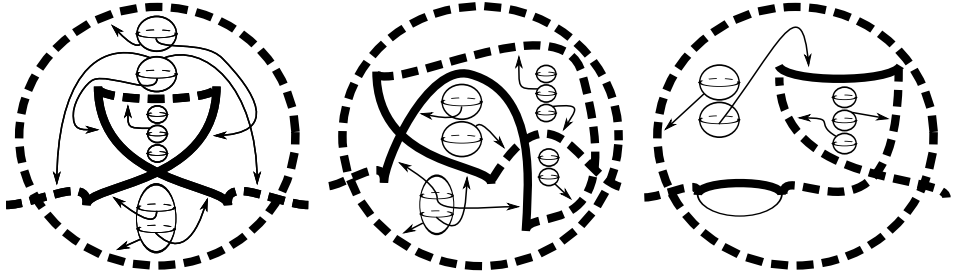}\end{center}\caption{Part of the deformation $g_{did}$.}\label{did2}\end{figure}
	The remaining images in the figure come from isotopies. In the first, a definite cusp passes across a definite fold. In these images, the combinatorial data of which sphere vanishes or appears at each fold arc (that is, the definite vanishing cycle data) is precisely the information that explains the validity of each modification. For each base diagram, in the region fibered by three spheres, the ``middle" sphere dies at the definite fold arc, so that the ``top" and ``bottom" spheres in that region persist as the ``top" and ``bottom" spheres in the region above. With this understood, the isotopy resulting in the middle of Figure \ref{did2} can be interpreted as a retraction of the region fibered by bottom spheres. Another isotopy results in the right side of the figure. In this move, the $\sqcup_2S^2$ fibers above the central region in the middle of Figure \ref{did2} are modified as the top arc passes downward over it by pinching a loop in the top sphere. The vanishing cycle data for the $\sqcup_3S^2$-fibered region of right side is obtainable from that of the previous diagram, where an arc connecting two definite cusps indicates an inverse merge that results in the left side of Figure \ref{did3}.
\begin{figure}[ht]\begin{center}\includegraphics[width=\linewidth]{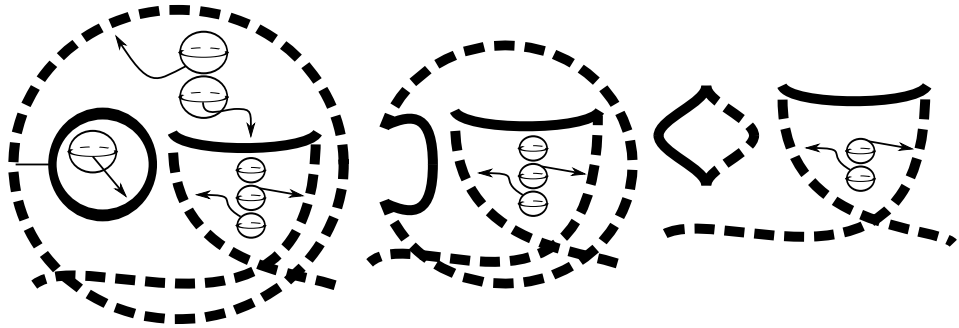}\end{center}\caption{Part of the deformation $g_{did}$.}\label{did3}\end{figure}
	Here, an arc indicates a definite merging move (that in some sense reverses the very first inverse merge that was indicated in the left side of Figure \ref{did1}), which results in the middle diagram. An isotopy results in the right side of Figure \ref{did3}, in which the 4-ball region of the fibration fibered by the ``top" spheres is retracted. An inverse definite birth finally results in the right side of Figure \ref{did}, completing the description of $g_{did}$. As with $g_{idi}$, this deformation is may be employed in place of a DID-type swallowtail. Then $f'$ is obtained from $f$ by appropriately substituting $g_{idi}$ and $g_{did}$ in place of all definite swallowtails.\end{subsubsection}
\end{proof}
\end{subsection}

\begin{subsection}{Simplification of the definite locus}\label{simp}Having eliminated the swallowtails adjacent to definite folds, $S_+$ is now a smooth surface embedded in $M_{(0,1)}$ whose boundary consists of a union of circles, which is precisely the definite cusp locus. Each component of this surface arises in the course of a deformation whose ends are free of definite folds, and the only way for a definite fold to arise in such a deformation is through a definite birth as in Figure \ref{defbirth}. For this reason, the closure of each component of this surface has nonempty boundary consisting of a collection of cusp circles in $M_{(0,1)}$. Now is a good time to introduce some terminology.
\begin{df}Fix a deformation, denoted $f$.\begin{itemize}
	\item A path component $\Delta\subset S_+(f)$ is \emph{admissible} if it satisfies the following:\begin{enumerate}\item\label{1} The 1-manifold $\partial\overline{\Delta}$ consists entirely of definite cusp points,\item\label{2} The surface $\overline{\Delta}$ is diffeomorphic to the unit disk $D\subset\C$, sending\begin{center}$\overline{\Delta}\cap M_t\mapsto\{z\in D:\Rp(z)=t\}$,\end{center}\item\label{3} The restriction $f|_{\overline{\Delta}}$ is an embedding.\end{enumerate}
	\item A deformation is admissible when each path component of its definite locus is admissible.
	\item For real numbers $0<a<b<1$, a properly embedded arc $\phi:[a,b]\rightarrow\overline{S_+(f)}$ which is monotonic in the sense that $\phi(s)\in M_s$ for all $s\in[a,b]$ is called a \emph{forward arc}. Using a small perturbation of $\lambda$ if necessary, the endpoints of any forward arc are assumed to lie at cusp points at which $\crit(f)$ is not tangent to $M_a$ and $M_b$.\end{itemize}\end{df}
\begin{subsubsection}{Standard neighborhoods}In this paragraph, fix a deformation, denoted $f$, and suppose $\alpha$ is forward arc for $f$, and choose some small $\epsilon>0$. Because of the local models for the cusp and fold points that constitute $\alpha$, there is a neighborhood $\nu\alpha\subset M_I$ with corners in which the critical locus looks like Figure \ref{stdnbhd1}.
\begin{figure}[ht]\centering\begin{minipage}[t]{0.5\linewidth}\centering\includegraphics[height=1.3in]{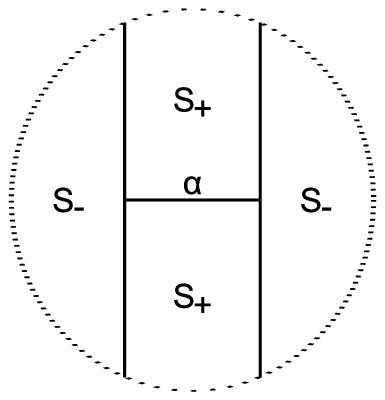}\caption{The critical locus of a standard neighborhood $\tau$.}\label{stdnbhd1}\end{minipage}% 
\begin{minipage}[t]{0.5\linewidth}\centering\includegraphics[width=\linewidth]{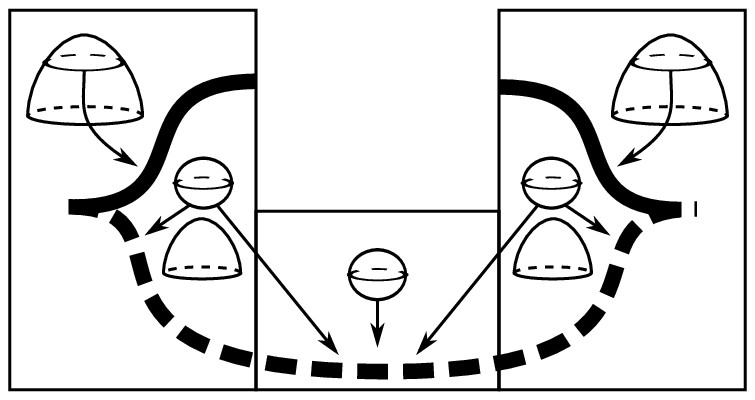}\caption{Base diagram of a cross-section of a standard neighborhood.}\label{stdnbhd2}\end{minipage}\end{figure}
More precisely, a standard neighborhood is such that $\nu\alpha\cap S(f)$ is a disk whose stratification can be given by coloring the unit disk $D\subset\C$ such that the following correspondences hold:\begin{displaymath}\begin{array}{rcl}\alpha &\leftrightarrow & \{z\in \R:|Re\ z|\leq\epsilon\}\\S_+(f)\cap\nu\alpha &\leftrightarrow &\{z\in D:|Re\ z|<\epsilon\} \\S_-(f)\cap\nu\alpha &\leftrightarrow & \{z\in D:|Re\ z|>\epsilon\}\\S^{(3,1,0)}(f)\cap\nu\alpha &\leftrightarrow &\{z\in D:|Re\ z|=\epsilon\}.\end{array}\end{displaymath}A standard neighborhood immediately inherits a fibration structure $\tau:D^4_{[-\epsilon,\epsilon]}\rightarrow D^2_{[-\epsilon,\epsilon]}$ from the local models, which is a perturbed version of a trivial deformation of the map depicted in Figure \ref{stdnbhd2}. As $\nu\alpha$ always comes equipped with the map $\tau$, the notation $\tau$ will denote a standard neighborhood both as a topological space and as a map. The map of Figure \ref{stdnbhd2} is the union of the standard local model for a definite fold arc and two copies of the model for definite cusps, and is divided into three pieces, each corresponding to a local model. The diffeomorphism type of the fiber is labeled, showing the total space as two 0-handles connected to each other by a 1-handle, each piece containing one properly embedded arc of critical points which smoothly meets with that of any adjacent pieces. Another way to understand Figure \ref{stdnbhd2} is as the result of removing a 4-ball fibered as in Figure \ref{fold} from the 4-ball fibered as in Figure \ref{defbirth}; in particular, the common boundary is fibered by disks, so that it is clear that, as in the case of the Cutting Lemma \ref{cut}, any modifications that occur relative to the disk fibers lying above the boundaries of the boxes in Figure \ref{stdnbhd2} actually take place on the interior of the total space. Denoting the map of Figure \ref{stdnbhd2} by $f_{slice}:D^4\rightarrow D^2$, with appropriate coordinates the standard neighborhood map $\tau$ has the expression $\tau(s,x)=(s,f_{slice}(x))$. Using appropriate coordinates, a forward arc is parametrized as $\phi(s)=(s,m)\in M_s$ for $s\in[a,b]$ and some fixed $m\in M$. Since the endpoints of $\phi$ lie at points where the cusp locus is transverse to each slice $M_s$, without loss of generality $\crit(f)\cap\nu\phi$ is transverse to the slices $M_s$. For this reason, there is a straightforward description of $\tau$ using base diagrams. The initial picture is precisely Figure \ref{defcusp}. As $t$ increases, an abrupt change occurs in which the cusp, the indefinite arc, and all disk fibers are omitted from the picture and all that is left is Figure \ref{deffold}. The remainder of the deformation is the reverse of the previous sentence. In this description, the image of $\phi$ is a point that first appears on the definite cusp and immediately moves into the interior of the definite locus as the deformation progresses, returning to and immediately disappearing at the cusp point after it reappears toward the end of the deformation. For a forward arc, the parameter $t$ appears in Figure \ref{stdnbhd2} parametrizing the left-to-right direction. It may help to visualize $\tau$ as a restriction of the deformation in which a definite birth happens to a trivial fibration $D^2\times D^2\rightarrow D^2$, immediately followed by the inverse of a definite birth.\end{subsubsection}
\begin{subsubsection}{A cutting deformation}The goal of this paragraph is to introduce a map $c:D^5\rightarrow D^3$ which is useful because it affords a way to ``cut" the definite locus of a deformation along a forward arc. Much like a standard neighborhood, this deformation has the property that it can be interpreted in two ways, both as a deformation as pictured and as a deformation when taking the parameter $t$ to point in the left-to-right direction. The description of $c$ uses base diagrams, beginning with Figure \ref{stdnbhd2}, on which there occurs a definite birth to obtain the left side of Figure\begin{figure}[ht]\begin{center}\includegraphics[width=\linewidth]{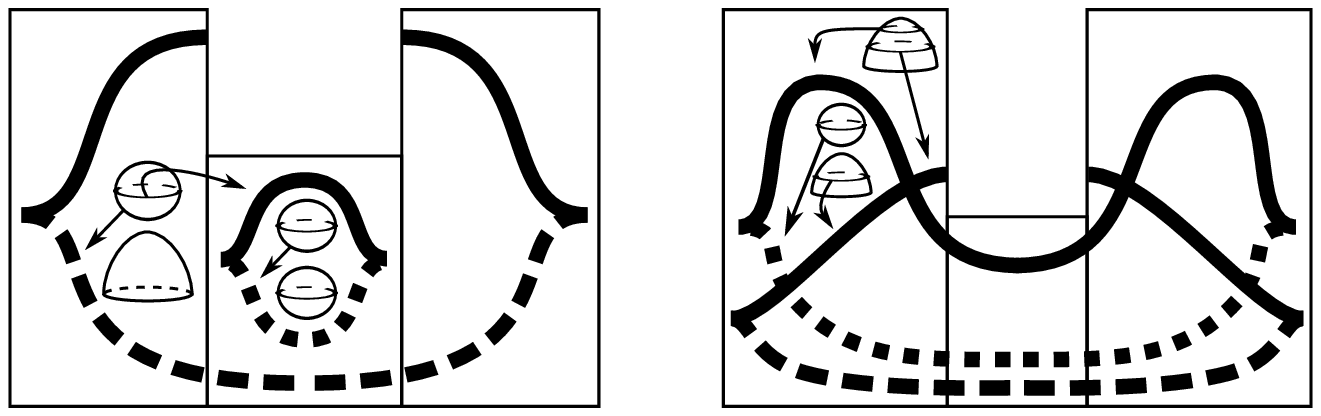}\end{center}\caption{The map $c$.}\label{cut1}\end{figure} \ref{cut1}. Here, the initial placement of the birth requires that the vanishing cycle of the indefinite fold of the new circle lies on the sphere fiber component, and the definite fold arcs are dotted differently to distinguish the two components of $S_+(c)$. An isotopy expands this new circle to obtain Figure \ref{cut1} right, where the two cusps have migrated over the two indefinite arcs, resulting in a new region whose regular fiber is obtained by connect summing one of the two sphere components in the center with the disk component on each side. Another isotopy pushes the inner definite arc outwards, expanding what was originally the inner $S^2$-fibered region to obtain Figure \begin{figure}[ht]\begin{center}\includegraphics[width=\linewidth]{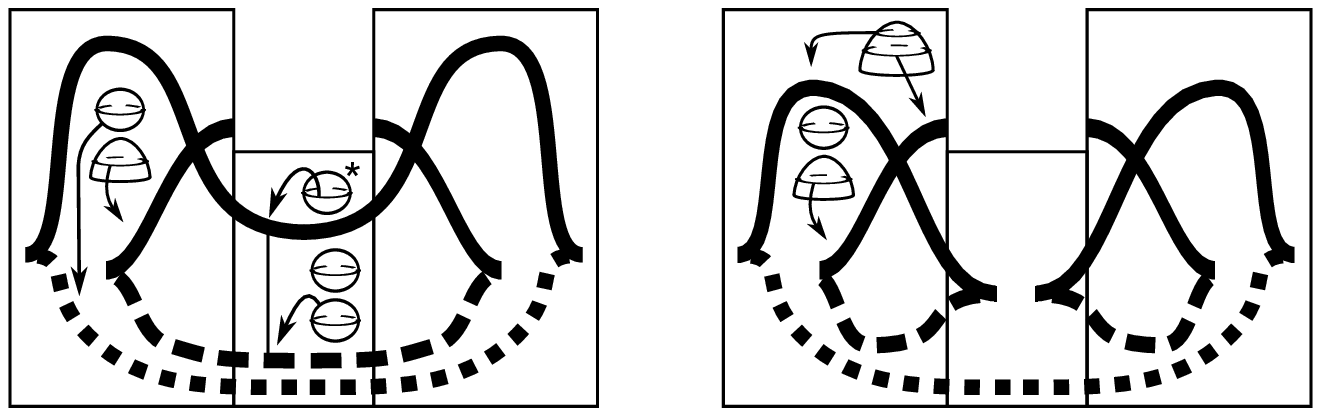}\end{center}\caption{The map $c$.}\label{cut2}\end{figure} \ref{cut2} left. Here a vertical arc signals a definite merge to obtain the right side of the figure (this move is easily seen to be valid by taking a hemisphere of the sphere marked with an asterisk as the disk fiber in Figure \ref{defmerge}). Here, as always, the vanishing cycles are determined by their appearance in the previous base diagram. Comparing the two loops to Figure \ref{idi}, it is clear that they may be removed by reverse IDI-type swallowtails, resulting in a return to Figure \ref{stdnbhd2}.\\

So far, $c$ is a map that does ``cut" along a forward arc, but the two swallowtails are rotated in a way that makes it unclear whether substituting $c$ for a forward arc preserves the deformation condition. An illustration of the image of the critical locus near a swallowtail point (whose image is marked with a dot) appears in Figure \ref{straighten2}, where normally $t$ would be in a direction parallel to the arc of double points; however, when $c$ is substituted for a forward arc, the two swallowtails appear with the homotopy parameter increasing in a perpendicular direction as shown.
\begin{figure}\centering\begin{minipage}[b]{0.5\linewidth}\centering\includegraphics{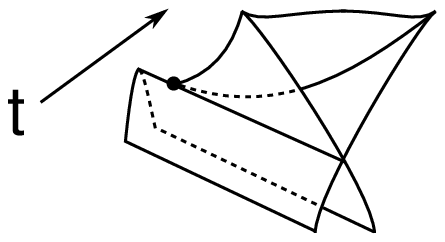}\caption{The critical image of a ``sideways" swallowtail.} \label{straighten2}\end{minipage}\begin{minipage}[b]{0.5\linewidth}\centering\includegraphics{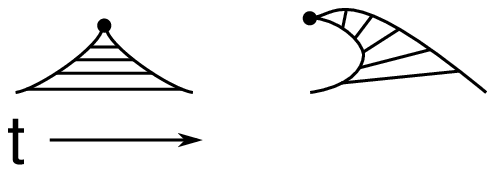}\caption{Twisting the previous picture (here $S_+$ is shaded).} \label{straighten3}\end{minipage}\end{figure} 
A small perturbation near each swallowtail point turns them so they appear with the expected local model. This perturbation is illustrated in Figure \ref{straighten2} by taking the swallowtail point (marked by a dot, where two cusp arcs meet) and pushing it to the left by a smooth homotopy, so that the arc of double points becomes parallel to the $t$ direction momentarily before resuming its original course. It is important to perform this homotopy so that the collection of points at which the cusp locus is tangent to $M_t$ coincides with the collection of points at which the closure of the fold locus is tangent to $M_t$. This condition ensures that these are merge points according to the discussion of Section \ref{criticalmanifold}. The next step is to apply Lemma \ref{defswallowtails} to remove the definite swallowtails, replacing each with a copy of $g_{idi}$. Looking back at $S_+(g_{idi})$, it is evident that the right side of Figure \ref{straighten3} changes so that, within that picture, a small neighborhood of the dot corresponding to the swallowtail point gets replaced with a definite birth point, appearing as in Figure \ref{straighten1}, which is a larger version of the right side of Figure \ref{straighten3} after the application of Lemma \ref{defswallowtails}. The upper part of the figure depicts a neighborhood of a definite cusp arc; the solid arc represents the image of the cusp locus and the dotted arcs represent the boundary of the image of the fold locus, all projected down to a plane.
\begin{figure}[ht]\begin{center}\includegraphics{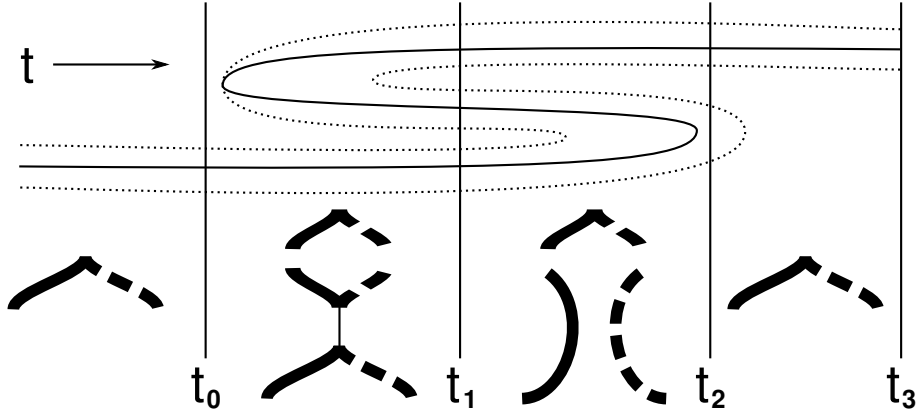}\end{center}\caption{A definite birth point at left that cancels with a definite merge point at right, shown by a dark cusp arc surrounded by a neighborhood in the fold locus.}\label{straighten1}\end{figure}
 Each vertical arc corresponds to a certain value of $t$, and it has a corresponding base diagram to its left. Note that the cusp arc is in general position with respect to $t$, and so its projection to the $t$ axis is morse, with two canceling critical points. The progression begins with a neighborhood of a cusp, followed by a definite birth at the left morse critical point (notice the closure of the fold locus is also tangent to $M_{t_0}$, forming a cup shape that opens to the right). Between $t_0$ and $t_1$ the boundary of the neighborhood cuts into the fold locus at two points, separating the two new cusp arcs to obtain the base diagram corresponding to $t=t_1$, where an arc signals an inverse merge, whose result appears at time $t_2$ (considering the cusps in Figure \ref{straighten2}, the fold locus necessarily forms a saddle near the right morse critical point). Finally two fold arcs disappear from the neighborhood of the cusp arc, giving the diagram for $t=t_3$. Note that at certain times the same cusp arc may appear at three different places in a given slice $M_t$ because of the presence of morse critical points in its $t$-projection. This is problematic because the goal is to make the critical locus admissible and such behavior will violate Condition \ref{2} for admissibility. To this end, note that the modifications between $t_0$ and $t_3$ may be removed by a homotopy involving the cancellation of the left and right morse critical points. Using an appropriate metric on $M$, the maximum lengths of the merge arc and of the fold arcs resulting from the birth approach zero (this is an unstable map, which is a deformation except for the point at which the cusp arc has a removable tangency with a slice $M_t$). The homotopy continues further until it appears that the birth/merge pair never occurred. Considering the discussion of Section \ref{criticalmanifold}, the condition that must be preserved is that no new tangencies should be introduced between the critical locus and the slices $M_t$, which is possible while performing the canceling homotopy relative to the boundary in Figure \ref{straighten1}. Applying this to each pair of points coming from the two modified definite swallowtails, this concludes the description of $c$.
\begin{lemma}[Cutting lemma]\label{cut}Let $\tau:\nu\alpha\rightarrow D^3$ be a standard neighborhood of a forward arc $\alpha$ for a deformation $f$. Then there is a deformation $f'$ such that $f'=f$ outside of $\nu\alpha$, and such that $f'|_{\nu\alpha}=c$.\end{lemma}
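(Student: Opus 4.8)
\emph{Proof proposal.}

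The plan is a cut-and-paste argument: define $f'$ by setting $f'=f$ on $M_I\setminus\nu\alpha$ and $f'=c$ on $\nu\alpha$, and then verify the two things that are not automatic, namely (a) that this prescription is well defined and smooth, and (b) that the resulting map is again a deformation. Since the construction of $c$ — including the perturbation that makes its two ``sideways'' swallowtails into honest features of a deformation — has already been carried out in the preceding paragraphs, the remaining work is essentially bookkeeping about the boundary of $\nu\alpha$ and about the critical manifold.

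For (a) I would observe that $c$ and $\tau$ carry the same boundary fibration. By construction $c$ begins and ends with the base diagram $f_{slice}$ of Figure \ref{stdnbhd2}, which is precisely the cross section defining the standard neighborhood map $\tau(s,x)=(s,f_{slice}(x))$; moreover every intermediate move used to build $c$ — the definite births, the isotopies, the definite merge, and the straightened inverse IDI-type swallowtails supplied by Lemma \ref{defswallowtails} — is supported over the interior of the base square, and the fibers lying above $\partial D^2$ are disks throughout, so nothing in the construction reaches a collar of $\partial\nu\alpha$. Hence $c$ agrees with $\tau$, and therefore with $f$, on such a collar, so $f'$ is a well-defined smooth map. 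The standing assumption that the endpoints of $\alpha$ lie at cusp points transverse to $M_a$ and $M_b$ is what makes the ``caps'' of $\nu\alpha$ standard and guarantees that this collar exists.

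For (b) I would use that being a deformation is a local condition on the critical locus together with the global constraint of Section \ref{criticalmanifold}: the projection $T:\crit(f')\to I$ must be Morse with only (definite and indefinite) births and merges among its critical values, and the only passages through unstable maps may be swallowtails. Away from $\nu\alpha$ the map $f'$ coincides with the deformation $f$, and every point of $\crit(f')\cap\nu\alpha$ carries one of the catalogued local models because $c$ is assembled from the catalogued moves. The one genuinely delicate point — which I expect to be the main obstacle — is that in the substituted copy of $c$ the two swallowtails of the original picture appear with the homotopy parameter running perpendicular to the arc of double points, as in Figure \ref{straighten2}. Here I would lean entirely on the straightening already described: push each swallowtail point so that the cusp arc is momentarily parallel to the $t$-direction, apply Lemma \ref{defswallowtails} to replace it by a definite birth, and then cancel the resulting birth/merge pair by a homotopy performed relative to $\partial\nu\alpha$ that shrinks the merge arc and the newly born fold arcs to zero length. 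The substantive content of that step is the verification that it can be arranged so that the tangencies of the cusp locus with the slices $M_t$ coincide with those of $\overline{S(f')}$ and that no new tangencies are created — exactly what Section \ref{criticalmanifold} demands of a deformation. Granting this, $\crit(f')$ is a properly embedded stratified surface of the required type and $T$ restricts to it as an admissible Morse function, so $f'$ is a deformation with $f'|_{\nu\alpha}=c$ and $f'=f$ elsewhere.
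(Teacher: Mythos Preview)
Your proposal is correct and follows the same cut-and-paste strategy as the paper: match the boundary fibrations so that $c$ can be glued in place of $\tau$, then verify that the resulting critical locus has the form characterizing deformations, singling out the sideways swallowtails as the only genuinely delicate point. The one detail the paper makes explicit that you gloss over is the dual-direction issue---since $c$ is described by a sequence of moves in a parameter \emph{perpendicular} to the ambient $t$, the paper invokes Propositions~\ref{birthcusp} and~\ref{mergecusp} (and their definite analogues) to justify that the births and merges used in constructing $c$ appear merely as isotopies when $f'$ is sliced along the ambient $t$, so that your assertion ``every point of $\crit(f')\cap\nu\alpha$ carries one of the catalogued local models because $c$ is assembled from the catalogued moves'' actually goes through.
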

\begin{proof}Denote the cross-section map appearing in Figure \ref{stdnbhd2} by $\tilde{k}$, and define a trivial deformation $k(s,x)=(s,\tilde{k}(x))$. The map $\tilde{k}$ is stable, hence there is some product neighborhood on which each slice of $\tau$ is right-left equivalent to $\tilde{k}$. In other words, there are appropriate local coordinates such that there is a subset $U\subset\nu\alpha$ on which $\tau|_U=k$. With this understood, $\tau|_U$ may be depicted with the unchanging base diagram of Figure \ref{stdnbhd2}, with the parameter $t$ understood to parametrize the left-to-right direction. Viewing $c$ and $\tau|_U$ as base diagrams for a pair of deformations, it is clear that the boundary fibrations induced by each are isomorphic, since the modifications taking place in each occur away from the boundary. For this reason $f'$ may be correctly interpreted as the result of a fibered gluing where the fibration induced by $c$ replaces the one induced by $k$, followed by smoothing along the common boundary. For the assertion that $f'$ is a deformation, it is sufficient to check that $\crit(c)$ is of the form that characterizes deformations. At every stage in Figures \ref{cut1} and \ref{cut2}, its description may be interpreted as a deformation when the $t$ parameter is seen moving from left to right; that is, the critical locus is of the appropriate form, as discussed in Section \ref{criticalmanifold}, except for the ``sideways swallowtails" which are addressed above. For the behavior that occurs between the base diagrams in Figures \ref{cut1} and \ref{cut2}, Propositions \ref{birthcusp} and \ref{mergecusp}, along with their definite counterparts, show that the modifications (definite and indefinite births and merges) that take place merely appear as isotopies when $f'$ is viewed as a sequence of base diagrams with increasing $t$.\end{proof}
\begin{figure}[ht]\begin{center}\includegraphics{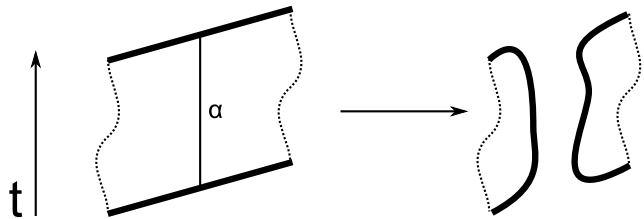}\end{center}\caption{A depiction of how the deformation $c$ modifies the definite locus of a standard neighborhood.}\label{cutpic}\end{figure}In Figure \ref{cutpic} there appears a schematic depiction of how the cutting lemma modifies the definite locus. A strip of definite folds bounded on two sides by cusp arcs may be interpreted as a 1-handle in some handle decomposition of the definite locus in which $\alpha$ is a cocore. The cutting lemma surgers out this 1-handle, producing two cusp arcs. Care has been taken to remove canceling morse critical points in the cusp locus, so that the cutting Lemma produces at most one pair of definite birth or merge points as indicated by two tangencies of the cusp locus with $\{t=const\}$ at the right side of the figure.\end{subsubsection}
\begin{lemma}[Simplification lemma]\label{simplemma}Suppose $f:M_I\rightarrow F_I$ is a deformation. Then there is an admissible deformation $f^{ad}$ such that $f_i=f^{ad}_i$, $i=0,1$.\end{lemma}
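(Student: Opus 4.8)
The plan is to strip the definite locus down by Morse theory in the $t$-direction, peeling off one ``$1$-handle'' at a time with the Cutting Lemma. First I would apply Lemma~\ref{defswallowtails} to replace $f$ by a deformation with the same endpoints whose definite swallowtail locus is empty; after this $\overline{S_+(f)}$ is a compact surface with corners, properly embedded in $M_{(0,1)}$ — it avoids $M_0$ and $M_1$ since $f_0,f_1$ are honest broken Lefschetz fibrations — and its boundary is a disjoint union of definite cusp circles, so Condition~(1) of admissibility already holds componentwise. A further isotopy of $f$ (itself an allowed move) makes the slice projection $T\colon\overline{S_+}\to I$ generic. Since definite births and merges occur at cusp points, every critical point of $T$ lies on $\partial\overline{S_+}$, and by Section~\ref{criticalmanifold} these are definite births (``index $0$''), definite merges (``index $1$''), and inverse definite births (``index $2$''). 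Counting handles, the Euler characteristic of a component equals its number of definite births minus its number of definite merges; since each component is born and dies inside $M_{(0,1)}$, a component with no definite merge is automatically a disk all of whose slices $\overline\Delta\cap M_t$ are single arcs joining the two cusp points over $M_t$ — that is, Condition~(2) holds for it. So it is enough to eliminate every definite merge.

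The main step is an induction on the number of definite merges. Given one, its $1$-handle in $\overline{S_+}$ is a strip of definite folds bounded on two sides by cusp arcs; after the ``straightening'' perturbation of Figures~\ref{straighten1}--\ref{straighten3}, which turns the naive cocore (which wants to lie inside a single slice) into an arc monotone in $t$ with endpoints at non-tangential cusp points, this cocore is a forward arc $\alpha$. Taking a standard neighborhood $\nu\alpha$ disjoint from the finitely many other critical points of $T$, the Cutting Lemma~\ref{cut} produces a deformation agreeing with $f$ off $\nu\alpha$ and equal to the cutting deformation $c$ on $\nu\alpha$; by Figure~\ref{cutpic} this surgers the $1$-handle out of $\overline{S_+}$, replacing it by two cusp arcs and introducing at most one canceling pair of definite birth/merge points. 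That residual pair is removed by the cancellation homotopy already built into $c$ (performed relative to the boundary of a standard neighborhood, so no new tangencies with the slices $M_t$ are created), and the net effect strictly lowers the number of definite merges. After finitely many cuts there are none.

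At that point every component $\overline\Delta$ of $\overline{S_+}$ is a disk with the chord structure required by Condition~(2), and it remains to arrange Condition~(3), that $f|_{\overline\Delta}$ be an embedding. Since $f$ carries $\overline\Delta\cap M_t$ into the pairwise disjoint spheres $S^2_t$, the failure of $f|_{\overline\Delta}$ to be injective is recorded by its self-double-point locus $D\subset\overline\Delta$, a properly embedded $1$-manifold. Near a cusp the definite fold image has no self-crossing, so $D$ misses $\partial\overline\Delta$; and since $\overline{S_+}$ now contains no swallowtails, Lemma~\ref{r1} rules out the Reidemeister-$1$ configuration, so $D$ has no endpoints and is therefore a union of circles. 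These are removed by isotopies — innermost circles of $D$ are pushed off by Reidemeister-$2$ moves in the evolving family of base diagrams — after which $f|_{\overline\Delta}$ is injective. The resulting admissible deformation is $f^{ad}$; every modification used was either an isotopy or supported in $M_{(0,1)}$, so $f_i=f^{ad}_i$ for $i=0,1$.

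The part I expect to be the real obstacle is making each cut legitimate: verifying that the cocore of a definite-merge $1$-handle can genuinely be straightened to a forward arc whose standard neighborhood meets $\crit(f)$ exactly as Section~\ref{simp} demands, and that the auxiliary canceling birth/merge pair produced by every invocation of the Cutting Lemma can always be removed inside the class of deformations without disturbing the remaining critical points, so that the induction terminates rather than cycling. Arranging Condition~(3) — controlling the self-intersections of each definite disk — is a secondary but nontrivial point of the same flavor.
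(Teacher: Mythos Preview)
Your overall architecture matches the paper's: kill definite swallowtails first, then repeatedly apply the Cutting Lemma to reduce $\overline{S_+}$ to a union of admissible disks. Two points, however, diverge from the paper's argument in ways worth flagging.

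\textbf{Condition~(2): choice of forward arc.} You propose to take the cocore of a merge $1$-handle and ``straighten'' it into a forward arc, citing Figures~\ref{straighten1}--\ref{straighten3}. Those figures address a different issue (rotating sideways swallowtails and cancelling the auxiliary birth/merge they create); they do not justify tilting a cocore that lies in a single slice into something monotone in $t$. The paper sidesteps this entirely: it chooses a definite \emph{birth} point $p$ and follows the fold region forward in $t$ to a definite merge point $q$, taking a properly embedded arc in $S_+$ from near $p$ to near $q$ with $\partial/\partial t>0$. Such an arc is monotone by construction, and after perturbing the endpoints onto the cusp locus it is a forward arc. Cutting along it leaves the merge point adjacent to a cancelling critical point of $T|_\gamma$, which is then removed as in the description of $c$. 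The paper also explicitly treats \emph{inverse} merges by reversing $t$ and repeating; your list of critical points of $T|_{\partial\overline{S_+}}$ omits these, and they must be eliminated separately before every component becomes a disk sliced as in Condition~(2).

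\textbf{Condition~(3): this is where your argument genuinely departs and has a gap.} You argue that the self-intersection locus $D\subset\overline\Delta$ is a union of circles and propose to push innermost ones off by Reidemeister-2 isotopies. Even granting that $D$ is a closed $1$-manifold, it is not clear that an arbitrary double circle of a definite-fold disk can be removed by an isotopy of the deformation: the two sheets meeting along such a circle are constrained by the fibration structure in every slice, and no argument is given that the required family of R2 moves exists globally. The paper does \emph{not} attempt this. Instead it reuses the Cutting Lemma: since there are no definite swallowtails, Lemma~\ref{r1} forbids R1 tangles in $D$, so $D$ is built from R2 and R3 tangles as in Figure~\ref{reid}; in each such tangle the two like-numbered arcs can be separated by a vertical (hence forward) arc in $\overline\Delta$, and cutting along it puts the two preimages of each double point into different components of $S_+$. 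Iterating yields a collection of disks on each of which $f$ is injective. So the paper achieves Condition~(3) by further cutting, not by isotopy, and this is the substantive correction to your plan.
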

\begin{proof}The argument is a repeated application of Lemma \ref{cut}, modifying each path component of $S_+(f)$ to satisfy the conditions of admissibility given in the definition, as follows. 
	\begin{subsubsection}{Condition 1}By Lemma \ref{defswallowtails} and the discussion of Section \ref{criticalmanifold}, without loss of generality $S_+(f)$ is an oriented embedded surface in $M_{(0,1)}$, equal to its interior, such that each path component $C$ satisfies $\emptyset\neq\partial\overline{C}\subset\gamma$, where $\gamma$ denotes the collection of definite cusp points of $f$; thus $S_+(f)$ satisfies Condition \ref{1}.\end{subsubsection} 
	\begin{subsubsection}{Condition 2}Fixing such $C$, there is a handle decomposition as the interior of a 2-dimensional $\{0,1\}$-handlebody. The next step is to show that there is a handle decomposition of $C$ such that each 1-handle $h$ contains a cocore which is also a forward arc; for in that case an application of the cutting lemma to each arc would transform $C$ into a union of disks. Recall that the projection $T:\crit(f)\rightarrow I$ is morse, and note that $\crit(T|_C)=\emptyset$ because of the local model for definite folds. The restriction $T|_\gamma$ is also morse, and as discussed above $\crit(T|_\gamma)=\crit(T|_{\overline{S_+}})$, and these points correspond to definite births, definite merges, and their inverses. Choose a definite birth point $p$ and consider the definite fold arc that it produces. Suppose that after some interval in $t$ the arc eventually takes part in a definite merge at a point $q$. Choose an arc that connects $p$ to $q$ in the region of definite folds under consideration, properly embedded, such that $\partial/\partial t>0$. Perturb so that its endpoints lie within the cusp locus away from $p$ and $q$ to make sure it is a forward arc and apply the cutting lemma. The original merge point persists, but now it is adjacent to a canceling element of $\crit(T|_\gamma)$ and may be removed as in the end of the description of $c$, straightening the newly produced cusp arcs. Now, in order for a definite merge point to exist, there has to be a preexisting arc of definite fold points, which can only arise via definite birth in the absence of definite swallowtails. For this reason, it is possible to apply this process repeatedly until all merge points are removed, possibly increasing the number of definite birth points and components of $S_+$. In a similar way, reversing the parameter $t$, all the inverse definite merge points can be eliminated, possibly increasing the number of inverse definite birth points and components of $S_+$. Calling this new deformation $f'$, and its definite cusp locus $\gamma'$, $\crit(T|_{\gamma'})$ now consists of definite birth and inverse definite birth points. In particular, each component $C'\subset S_+(f')$ appears in a definite birth, and the diffeomorphism type of its cross section $C'_t$ is unchanged until it encounters the next element of $\crit(T|_{\partial C'})$, which is necessarily an inverse birth, at which time the cross section (and $C'$) vanishes. This gives the parametrization condition $\overline{\Delta}\cap M_t\mapsto\{z\in D:\Rp(z)=t\}$ and thus Condition \ref{2}.\end{subsubsection} 
	\begin{subsubsection}{Condition 3}The final step is to cut the disks $\Delta_1,\ldots,\Delta_k$ that constitute $S_+(f')$ (each of which is in general immersed by $f'$) into smaller disks, each of which is embedded under the deformation, which will then be admissible. Fix one component $\Delta_i$ and call its immersion locus $D$. By Lemma \ref{r1}, the tangle corresponding to the Reidemeister-1 move is absent from the tangle depiction of $D$, implying that $D$ is a union of the tangles corresponding to the other two Reidemeister moves. Observing Figure \ref{reid}, any pair of like-numbered components in these two Reidemeister pictures can easily be separated by a vertical arc, and joining these arcs in $\Delta$ results in a forward arc, which causes $A$ and $B$ to lie in different components of $S_+$ after applying the cutting lemma. This gives Condition \ref{3}.\end{subsubsection}\end{proof}\end{subsection}
\begin{subsection}{Removal of the definite locus}\label{removal}In this section fix an admissible deformation $f$ and a path component $\Delta\subset S_+(f)$. The circle $\partial\overline{\Delta}$ consists of cusp points, and so by the local model of the definite cusp there is a tubular neighborhood $\nu=\nu(\overline{\Delta})\subset S(f)$ on which the deformation map is a homeomorphism. For this reason $\nu\subset M_I$ is diffeomorphic (and $f(\nu)$ is homeomorphic) to a complex disk in which $\Delta$ itself is identified with $\{|z|<1\}\subset\C$, the boundary of $\Delta$ consists of a circle of cusps identified with $\{|z|=1\}$, and $\overline{\Delta}$ has a collar consisting of indefinite fold points which is identified with $\{1<|z|<2\}\subset\C$, all of which are embedded under the deformation map. As remarked in Section \ref{simp}, as $t$ increases $\Delta$ must arise via a definite birth as in Figure \ref{defbirth}, and by the same reasoning, it is straightforward to see that $\Delta$ must also vanish via the definite birth model, and between these values of $t$ the restriction $f|_\nu$ has a base diagram given by Figure \ref{stdnbhd2}.

Beginning with a region swept out by an arc of indefinite fold points, one may perform a flip and then remove the loop that results by an inverse flip, resulting in a new deformation that has the addition of two arcs of cusp points which meet at two swallowtail points. Certainly one may extend the second swallowtail (and thus the cusp arcs) forward in $t$ as far as the surface of indefinite fold points extends, and backwards similarly. Following the suggestive Figure 36(1) of \cite{AW}, it makes sense to refer to such a pair of swallowtails as a \emph{canceling pair}. 

With this understood, the first step for removing $\Delta$ is to introduce two pairs of canceling flips alongside $\Delta$, and at this point it is instructive to refer to Figure \ref{cs}, in which the progression is from left to right and back again, beginning and ending with the empty diagram, which describes a trivial fibration by disks. The first step is obviously a definite birth. For the two pairs of canceling swallowtails, the initial two flips occur on the indefinite arc that appeared with the definite birth, and the canceling pairs extend forward with respect to $t$ such the resulting cusped loops in the base diagram persist into the intermediate stage pictured at the right, continuing until $\nu\Delta$ again appears as in the middle diagram. Then the inverse swallowtails appear just before the inverse definite birth, closing off the two loops before the rest of $\crit(\nu\Delta)$ disappears. In this way, for some closed interval $J\subset I$, Figure \ref{cs} describes a map $M_I\supset D^4_J\rightarrow D^2_J$ with two canceling pairs of flips.\\
\begin{figure}[hb]\begin{center}\includegraphics[width=0.8\linewidth]{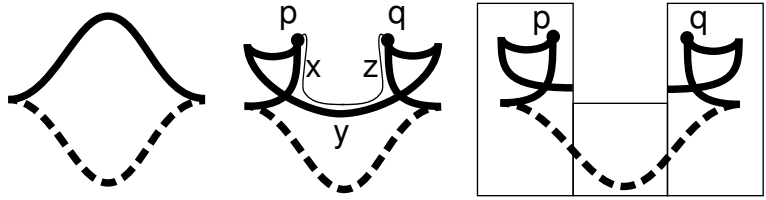}\end{center}\caption{Base diagrams for a neighborhood of an admissible disk $\Delta$ endowed with two pairs of canceling flips and an arc which signals an inverse merging move.}\label{cs}\end{figure}

Consider the two closed arcs $P,Q$ of cusp points, parametrized in $M_I$ by $P_t=(t,P(t))$ and $Q_t=(t,Q(t))$, whose images under $f$ appear in the base diagrams of Figure \ref{cs} as the points $p$ and $q$. Without loss of generality, the initial points of $P$ and $Q$ both lie in $M_a$ and both terminal points lie in $M_b$ for some $[a,b]$ in the interior of $J$. In the middle of Figure \ref{cs}, near to three indefinite arcs labeled $x,y$, and $z$, there appears an arc $\beta_a\subset M_a$ suitable for an inverse merging move between the cusps $P_a$ and $Q_a$ (and in a symmetric fashion there is an arc $\beta_b\subset M_b$). The goal here is to show that there is a one-parameter family of such arcs $\beta_t$, $t\in[a,b]$.\\

The arc $\beta_a$ is a slight perturbation of an arc $\tilde{\beta}_a$ whose image lies entirely within the critical image, and which indeed lies entirely in the indefinite locus except for two short paths in fibers above two points. In the base diagram, this arc runs from $p$ downward along $x$ toward the intersection with $y$, and at that point in $M_a$ it leaves the critical locus, following a path in the fiber above $x\cap y$ to the node corresponding to $y$. The path then proceeds along $y$ into its interior. Coming from $q$ the path proceeds analogously so the two pieces meet in the middle of $y$. Perturbing this arc into the regular locus in the direction away from the sphere fibers gives the smooth arc $\beta_a$ (and analogously $\beta_b$).\\

In the same diagram, the spherical fibers are vanishing cycles for the definite fold arc. Flowing these spherical fiber components as far upward as possible shows that they map to a region which is bounded by a circle consisting of the definite image and three sub-arcs of $x$, $y$, and $z$ (to be precise, this is the region of regular values in the base diagram containing the letter $y$). Appropriately replacing the sub-arcs which are adjacent to the definite cusps gives $\tilde{\beta}_a$. This behavior persists for each $t\in[a,b]$ in the sense that, within each slice $M_t$, flowing the spherical vanishing cycles of $\Delta_t$ outward as far as possible always terminates at some indefinite arc (such a flow must terminate, and if one of these flows were to terminate at a definite arc not contained in $\Delta$, then there must have been a definite merge, which is a contradiction). This singles out a family of arcs $\tilde{\beta}_t$ which sweep out a continuously embedded disk $\tilde{\beta}$, which when perturbed in the direction of flow gives a disk $\beta$ such that for dimensional reasons may be assumed to intersect the critical locus at precisely $P$ and $Q$. For this reason, the slices $\beta_t$, $t\in[a,b]$ form a family of arcs which are then suitable for inverse merge between their cusp endpoints. Thus a 1-parameter family of inverse merging moves may be performed between $P$ and $Q$, along the arcs $\beta_t$, by a homotopy of $f$.\\

As in Example \ref{birthmerge}, the fibration depicted in the center of Figure \ref{cs} can also be obtained from the trivial fibration by disks by performing the definite birth as before, but then performing an indefinite birth followed by an isotopy and a merging move instead of the two flips. This is substituted into the deformation in the same way as in the proofs of Lemmas \ref{defswallowtails} and \ref{cut} at both ends of the modified neighborhood $\nu$. The result is that the component of the critical locus containing $\Delta$ is a sphere, on which the deformation is injective, composed of an indefinite disk glued along its cusp boundary circle to a definite disk, which by the structure of deformations must occur as a definite birth followed by isotopy, ending with an inverse definite birth. Such a sphere $S$ may be removed by a homotopy in which each circle given by $S\cap M_t$ shrinks to a point and disappears via inverse definite birth, the definite birth points at either end approaching each other and disappearing with $S$. In this way, each component of $S_+$ may be removed, and by Theorem 4.4 of \cite{L}, the resulting deformation of wrinkled fibrations is given by a sequence of the moves of Section \ref{icat}. This completes the proof of Theorem \ref{T}.\end{subsection}\end{section}

\begin{section}{Applications and conjectures}\label{apps}\begin{subsubsection}{Simplified purely wrinkled fibrations}\label{spwf}The first existence result for broken Lefschetz fibrations appeared in \cite{ADK} in the case where $M$ is equipped with a \emph{near-symplectic structure}, which is a closed 2-form that vanishes transversely on a smoothly embedded 1-submanifold. More general results followed, beginning with \cite{GK} and culminating with \cite{B2}, \cite{L} (written from the perspective of singularity theory) and \cite{AK} (written using a handlebody argument). After some more terminology, there follows another somewhat more specialized existence result.
\begin{df}Suppose two smooth maps $f_,g$ from a fixed 4-manifold $M$ into a surface $F$ are related to each other by a sequence of the moves of Section \ref{icat}. Then $f$ is \emph{equivalent} to $g$.\end{df}
In particular, any pair of homotopic broken Lefschetz fibrations of $M$ are equivalent. There are homotopic maps which are not equivalent: a simple example is a broken Lefschetz fibration and the new map obtained by performing a definite birth.
\begin{df}Suppose $f:M\rightarrow S^2$ is a stable map such that $S=\crit f$ is a single cusped circle of indefinite folds, such that $f|_S$ is injective. Combining terminology from \cite{B1} and \cite{L}, such a map is called a \emph{simplified purely wrinkled fibration}, or SPWF for short.\end{df}
\begin{cor}\label{spwfcor}Every broken Lefschetz fibration is equivalent to some simplified purely wrinkled fibration.\end{cor}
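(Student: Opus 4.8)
The plan is to deduce Corollary \ref{spwfcor} from Theorem \ref{T} together with an explicit construction of an SPWF lying in the homotopy class of the given broken Lefschetz fibration $f$. The starting observation is that each move catalogued in Section \ref{icat} --- isotopy, birth, merge, flip, sink, wrinkle --- is realized by an honest homotopy of the underlying map $M\to S^2$, so equivalent maps are in particular homotopic. Consequently it is \emph{not} enough to know that $M$ admits some SPWF (the equivalence relation cannot change homotopy class); what is needed is an SPWF homotopic to $f$, after which the two are to be joined by moves.

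\textbf{Producing an SPWF homotopic to $f$.} I would first apply the wrinkle move (Figure \ref{wrinkle}) to each of the finitely many Lefschetz critical points of $f$, which are supported on disjoint $4$-balls. The result is a purely wrinkled fibration $f_1$, homotopic to $f$, whose critical locus is a disjoint union of cusped circles of indefinite folds. Next, following the simplification techniques of \cite{L} (see also \cite{B1}), I would bring these circles into a configuration suitable for merging by isotopies, adjust their round vanishing cycles by flips --- each flip replaces a fiberwise annular neighborhood of a fold arc by the genus-raising piece of Figure \ref{flip}, altering the relevant vanishing cycle by Dehn twists --- until neighboring vanishing cycles meet transversely in a single point, and then fuse the circles by merges; finally I would remove the double-point arcs of the critical image one at a time by further flips and isotopies. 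The only potential obstruction here, a Reidemeister-$1$ double point, cannot arise once the image is embedded: by Lemma \ref{r1} such a double point must limit on a swallowtail, which a purely wrinkled fibration does not contain. This produces an SPWF $g$, and since every step above is a homotopy, $g$ is homotopic to $f$.

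\textbf{Joining $f$ to $g$.} Now sink each of the finitely many cusps of $g$ into the fold locus. This produces a genuine broken Lefschetz fibration $g'$ with $g'\sim g$ (by sink moves) and $g'$ homotopic to $g$, hence to $f$. Since $f$ and $g'$ are homotopic broken Lefschetz fibrations, Theorem \ref{T} supplies a sequence of the moves of Section \ref{icat} connecting them, so $f\sim g'$; chaining the equivalences gives $f\sim g'\sim g$, proving the corollary. (If the simplification in the previous paragraph is taken to be literally a sequence of Section \ref{icat} moves, then already $f\sim f_1\sim g$ and this last paragraph is superfluous; I include the detour through Theorem \ref{T} only so as not to depend on that bookkeeping.)

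The step I expect to be the main obstacle is the consolidation --- fusing all the cusped circles into one. A merge of two circles is legal only when their round vanishing cycles, read off in a common intermediate fiber, intersect transversely in exactly one point, and a configuration of circles produced by arbitrary wrinkling need not be anywhere near that. The standard device for repairing this is the so-called ``flip and slip'' manoeuvre: a flip raises the genus of a fiber locally, and sliding the resulting loop of folds around the base conjugates the vanishing cycle by prescribed Dehn twists, so that after finitely many flips and slips some merge becomes available; iterating, one reaches a single circle. Carrying the vanishing-cycle data faithfully through this process, and arranging at the end that the critical image can be isotoped to an embedding, is where the genuine work lies.
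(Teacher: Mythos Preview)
Your approach differs from the paper's in a structurally important way, and the difference is exactly where your acknowledged obstacle lies. The paper does \emph{not} attempt to simplify the given $f$ directly. Instead it invokes the Gay--Kirby construction \cite{GK}: for any framed surface $F\subset M$ with trivial normal bundle there is an achiral broken Lefschetz fibration of a very specific shape (concentric smooth indefinite circles about an equator carrying all the isolated singularities), and by Thom--Pontrjagin one obtains such a map in every homotopy class $[M,S^2]$. Theorem~\ref{T} then makes the given $f$ equivalent to the Gay--Kirby map in its class, and the remainder of the proof is an explicit, completely determined algorithm (wrinkle the equatorial points, then repeatedly ``flip and slip'' the innermost smooth circle outward and inverse merge it with the cusped one, as in Figures~\ref{spwf1} and~\ref{flipandslip}) that converts \emph{that particular} configuration into an SPWF. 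The point is that the nested, directed structure of the Gay--Kirby picture is what makes each inverse merge available without any vanishing-cycle gymnastics.

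Your direct route has two concrete gaps. First, after wrinkling the Lefschetz points of $f$ the critical locus is \emph{not} a union of cusped circles: the original indefinite fold circles of $f$ are still there, smooth and cusp-free, and you give no mechanism for absorbing them (this is precisely what the paper's flip-and-slip handles, but only because those circles are concentrically nested in the Gay--Kirby picture). Second, your consolidation step --- adjust vanishing cycles by flips until a merge is legal, then repeat --- is not an algorithm: for an arbitrary immersed configuration of fold circles with arbitrary monodromy you have not shown that any finite sequence of flips, isotopies and merges reaches a single embedded cusped circle, and the references \cite{L,B1} you cite do not supply this in general. (Your invocation of Lemma~\ref{r1} is also misplaced: that lemma concerns the double-point locus of a deformation $M_I\to S^2_I$, not the critical image of a single stable map $M\to S^2$; a flip on a purely wrinkled fibration produces exactly a Reidemeister-1 loop in the base with no swallowtail in sight.) The paper's detour through Gay--Kirby is not incidental --- it is the device that replaces your open-ended consolidation problem with a finite, explicit one.
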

\begin{proof}In \cite{GK}, the authors show that each smooth oriented 4-manifold admits an \emph{achiral} broken Lefschetz fibration, where the term \emph{achiral} signifies that in complex coordinates their local model reads $(z,w)\mapsto z\overline{w}$ instead of the (complex orientation preserving) $zw$. The map is such that all fibers are connected, the indefinite fold locus is mapped mapped diffeomorphically to a collection of embedded circles parallel to and disjoint from some other embedded circle called the equator, and traveling away from the equator each indefinite circle is oriented such that passing over that circle results in the genus of the fiber decreasing by 1, so that the highest-genus region is a neighborhood of the equator and the lowest-genus region is one or both of the ``poles." The remainder of the critical locus is a finite number of Lefschetz and achiral Lefschetz critical points, all of which appear along the equator. \begin{figure}[ht]\begin{center}\includegraphics[width=\linewidth]{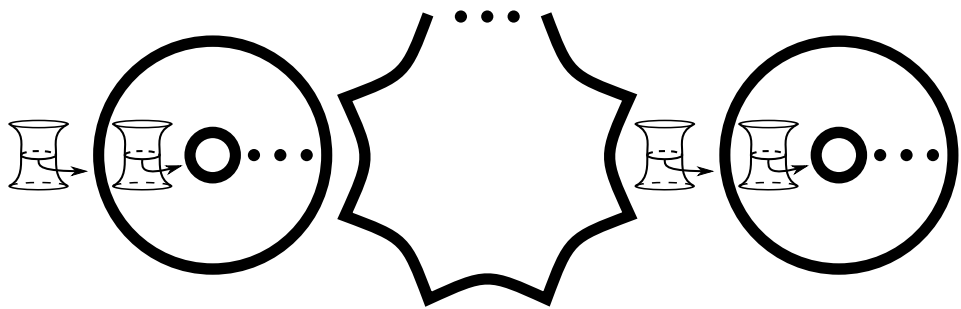}\end{center}\caption{A base diagram for the Gay-Kirby fibration, modified by wrinkling all isolated critical points, then inverse merging the resulting cusped circles.}\label{spwf1}\end{figure} The construction of \cite{GK} begins with an arbitrary embedded, smooth, oriented surface $F\subset M$ with trivial normal bundle, together with a map from a neighborhood $\nu F$ of that surface projecting $D^2\times F\rightarrow D^2$ as a fiber of the fibration. The (arbitrary) parametrization of $\nu F$ implicity specifies a framing for that surface, and hence a framed cobordism class of surfaces. The Thom-Pontrjagin construction shows that homotopy classes of maps $M\rightarrow S^2$ are in one-to-one correspondence with framed cobordism classes of surfaces. Thus there is a map as in Figure \ref{spwf1} in every homotopy class of maps $M\rightarrow S^2$. Finally, Theorem \ref{T} implies any broken Lefschetz fibration homotopic to one of these maps is actually equivalent to it. For this reason, it suffices to give an algorithm to show that such a map is equivalent to some SPWF. In what follows, for ease of reading, a circle of indefinite folds (that is, one that is free of cusps) is called \emph{smooth}. In \cite{L} the author shows that there is a wrinkling type of modification of a neighborhood of an achiral Lefschetz critical point, with the result that the vanishing cycles appear in the reverse order from that found in Figure \ref{wrinkle} (see also \cite{B3} for a handlebody argument for fixing achiral points). Performing this modification on all the achiral (and wrinkling the other) Lefschetz critical points, followed by combining the resulting 3-cusped circles via inverse merging moves, results in a wrinkled fibration $M\rightarrow S^2$ shown in Figure \ref{spwf1}, where a regular value on the equator is omitted from the base diagram to allow a suitable projection. At this point, a version of the modification of Figure 5 of \cite{B2} and Figure 11 of \cite{L} is available to the innermost of the nested circles, shown in Figure \ref{flipandslip}.\begin{figure}[ht]\begin{center}\includegraphics[width=\linewidth]{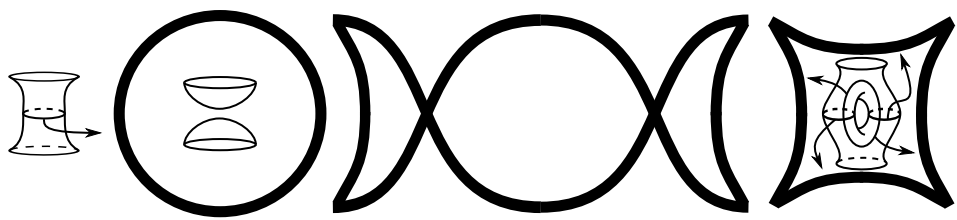}\end{center}\caption{Modifying an indefinite circle by two flips followed by an isotopy.}\label{flipandslip}\end{figure} In general, there will be two nested families of circles as in Figure \ref{spwf1}, and the next step in the modification is to perform the modification of Figure \ref{flipandslip} on the innermost circle of each, then to expand this circle to be the outermost member of its family, and perform an inverse merge between one of the four cusps in that circle and a cusp in the circle that resulted from the wrinkling moves. If there were no isolated critical points to begin with (and hence no cusped circle as in the middle of Figure \ref{spwf1}), then perform an inverse merge between a cusp in each of the the two outermost circles. If there was only one family of nested circles, then inverse merge the cusped outer circle with the one that came from the isolated critical points. The result is a large cusped circle with one or two families of nested smooth circles inside its higher-genus region. Assuming there are two smooth nested families, the modification eliminates one of them by repeating the process of flipping, expanding, and inverse merging until there is only one family of nested smooth circles, which appears in the center of a family of nested cusped circles. If this family of smooth circles is nonempty, perform the modification of Figure \ref{flipandslip} on the innermost circle and expand it to be the outermost circle of its family, repeating until there are no more smooth circles. The result is a map which embeds its critical locus into a base diagram in which the critical image is a collection of $n$ cusped circles nested in $S^2$, all with the same orientation (using the language of \cite{B1}, the fibration is \emph{directed}). Now the circle bounding the lowest-genus region may be modified as in Figure \ref{flipandslip}, reversing its orientation in $S^2$ such that an inverse merge may take place between one of its cusps and one of the cusps of the adjacent circle, which results in a nested family with $n-1$ components. Repeating this process until a unique circle remains results in an SPWF.\end{proof}
Considering the unsinking move in Section \ref{icat}, this corollary implies there is some control over how the round vanishing cycles relate to the Lefschetz vanishing cycles in a broken fibration, since a Lefschetz critical point that results from unsinking a cusp always has a vanishing cycle that intersects that of the nearby indefinite fold transversely at a unique point. Corollary \ref{spwfcor} also implies the following concerning a new way to express a 4-manifold.
\begin{cor}Any smooth closed 4-manifold may be specified by a chain $\{\gamma_i\}_{i\in\Z/k\Z}$ of simple closed curves in an orientable closed surface $F$ such that $\gamma_i$ transversely intersects $\gamma_{i+1}$ at a unique point.\end{cor}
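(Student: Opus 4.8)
The plan is to unpack what an SPWF gives us combinatorially and then quote the previous corollary. By Corollary \ref{spwfcor}, every smooth closed oriented $4$-manifold $M$ admits a simplified purely wrinkled fibration $f:M\to S^2$; I would argue that the diffeomorphism type of $M$ is determined by the combinatorial data of such an $f$, and that this data is exactly a cyclically ordered chain of simple closed curves $\{\gamma_i\}_{i\in\Z/k\Z}$ in a closed orientable surface $F$ with $\gamma_i\pitchfork\gamma_{i+1}$ at a single point. (Since every closed oriented $4$-manifold admits \emph{some} broken Lefschetz fibration, e.g.\ by \cite{GK}, and by Corollary \ref{spwfcor} it is equivalent to an SPWF, the hypothesis ``smooth closed'' suffices; I would note orientability of $M$ is used implicitly, as in the rest of the paper.)

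First I would recall the structure of an SPWF: $S=\crit f$ is a single circle of indefinite folds carrying $k$ cusp points (for some $k\ge 1$), $f|_S$ is injective, and $f(S)$ is an immersed curve in $S^2$ with $k$ cusps and no other singularities, cutting $S^2$ into two regions. Over the higher-genus side the fiber is a closed surface $F$ of some genus $g$; over the lower-genus side it is $F$ with one round handle removed, i.e.\ genus $g-1$ (the innermost region being a disk- or lower-genus piece is handled by the same local picture). Traversing the cusped circle, each arc of indefinite folds between consecutive cusps $i$ and $i+1$ carries a round vanishing cycle $\gamma_i\subset F$; the local model for a cusp (Figure \ref{cusp}) forces the two vanishing cycles meeting at a cusp to be simple closed curves in a punctured torus intersecting transversely at one point, so $\gamma_i\pitchfork\gamma_{i+1}$ in exactly one point for every $i$ modulo $k$. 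Thus an SPWF canonically produces the desired chain.

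Next I would run the construction in reverse to see the chain determines $M$: given $(F,\{\gamma_i\}_{i\in\Z/k\Z})$ with the single-transverse-intersection condition, build the total space by taking $F\times D^2$ over the higher-genus region, attaching, over each fold arc, the standard round $2$-handle/round $1$-handle piece prescribed by the local model of Equation \ref{eq:fold1} with attaching data $\gamma_i$, and over each cusp the standard cusp model of the displayed local equation in Section 2.1.3 (which is consistent precisely because consecutive $\gamma_i$ intersect once, matching the punctured-torus picture). These local pieces glue uniquely up to diffeomorphism because the gluing regions are fibered by disks (as exploited repeatedly via the Cutting Lemma \ref{cut} in the body of the paper), so the resulting closed $4$-manifold is well defined, and it visibly carries an SPWF with the given data. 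Hence the correspondence $M \leftrightarrow (F,\{\gamma_i\})$ is surjective onto diffeomorphism types, which is exactly the assertion.

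The main obstacle is bookkeeping rather than a deep point: I need to be careful that the ``chain'' really closes up cyclically and that the genus of $F$ and the nesting of regions are pinned down by the SPWF (an SPWF as defined has a \emph{single} cusped circle, so there is exactly one region on each side and the ambiguity of nested families that appears in the proof of Corollary \ref{spwfcor} has already been eliminated). I would also remark that the chain is far from unique --- different SPWFs of the same $M$ give different chains, related by the moves of Section \ref{icat} translated into curve-chain language (inverse merges, flips, etc.) --- so the statement is an existence/surjectivity result, which is all that is claimed. With these caveats the proof is just the two directions above: an SPWF yields a chain, and a chain reconstructs an SPWF and hence $M$.
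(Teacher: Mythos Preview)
Your overall strategy---extract the cyclic chain of round vanishing cycles from an SPWF furnished by Corollary~\ref{spwfcor}, then observe that the chain rebuilds the fibration and hence $M$---is the same as the paper's. The gap is in your reconstruction step and its uniqueness claim.

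After you attach the round-handle and cusp models to $F\times D^2$ according to the $\gamma_i$, what you have is a $4$-manifold with boundary; the boundary is a surface bundle over $S^1$ with fiber the lower-genus surface $F'$. You must then cap off with $F'\times D^2$, and this gluing is along $F'\times S^1$, not along disk-fibered pieces. Your sentence ``these local pieces glue uniquely up to diffeomorphism because the gluing regions are fibered by disks'' is therefore not correct for this last step (and the appeal to the Cutting Lemma is misplaced: that lemma concerns standard neighborhoods of forward arcs inside deformations, not the global closing-up of an SPWF). The capping is determined only up to a loop in $\mathrm{Diff}(F')$, and when $F'$ is $S^2$ or $T^2$ this loop can be nontrivial, so the chain alone does not pin down $M$.

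The paper handles exactly this point: it invokes the flip-and-slip modification of Figure~\ref{flipandslip} to inflate the fiber genus so that $F'$ has genus at least $2$, where the relevant diffeomorphism group has trivial $\pi_1$ and the ambiguity disappears. If you add this genus-inflation step (or otherwise rule out low-genus fibers), your argument goes through and coincides with the paper's.
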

\begin{proof}Here, $k$ is the number of cusps, $F$ is the higher-genus fiber, and the curves $\gamma_i$ are the round vanishing cycles obtained by traveling around the critical image of an SPWF of the 4-manifold. The modification appearing in Figure \ref{flipandslip} and \cite{L,B2} can be used to globally inflate the fiber genus to an arbitrarily high integer, in this way avoiding any ambiguity in gluing data coming from the noncontractible loops in the diffeomorphism groups of $S^2$ or $T^2$.\end{proof}\end{subsubsection}
\begin{subsubsection}{Fibrations in distinct homotopy classes}\label{projection}It is known that the homotopy classes of smooth maps from a 4-manifold to the 2-sphere are in bijective correspondence with framed cobordism classes of smoothly embedded oriented surfaces. For this reason, there are generally infinitely many broken Lefschetz fibrations $M\rightarrow S^2$ which are not related by the moves of Section \ref{icat}. Thus a true uniqueness result for broken fibrations requires a move that relates maps in distinct homotopy classes. The idea is to compose any map $M\rightarrow S^2$ with the projection $S^2\rightarrow D^2$ in a way that is stable, reversible and explicit. Then a short argument shows that all such maps are related by Lekili moves. The first step is to describe a map $\beta_g$ which is used in the construction. Consider the standard handle decomposition of the orientable genus $g$ surface $\Sigma_g$ as the union of a 0-handle with $2g$ 1-handles and a 2-handle. Crossing with an interval, one obtains the analogous handle decomposition, which is induced by a morse function \begin{equation}\tilde{\beta_g}:\Sigma_g\times[0,1]\rightarrow[1/2,1]\end{equation} which is chosen such that the fiber over 1 is a point that expands into a three-dimensional 0-handle as the values of $\phi$ decrease. Continuing in this direction there are $2g$ 1-handles $h_1,\ldots,h_{2g}$, and finally there is a 2-handle attached along a curve that travels over the 1-handles in the order $h_1,h_2,h_1^{-1},h_2^{-1},\ldots,h_{2g}^{-1}$, causing the fiber over $1/2$ to be $\Sigma_g\sqcup\Sigma_g$. With this understood, the map $\beta_g$ is obtained by crossing $\tilde{\beta_g}$ with the circle:\begin{equation}\beta_g=:\tilde{\beta_g}\times id:\Sigma_g\times[0,1]\times S^1\rightarrow[1/2,1]\times S^1.\end{equation}Referring to the local models for definite and indefinite folds, it is clear that $\beta_g$ is a stable map; see Figure \ref{beta} for a base diagram.\begin{figure}[ht]\begin{center}\includegraphics[width=\linewidth]{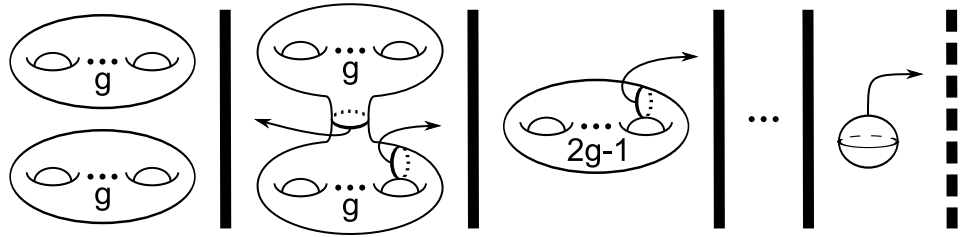}\end{center}\caption{Part of the map $\beta_g$, whose critical image consists of $2g+1$ concentric indefinite circles within a definite circle.}\label{beta}\end{figure}

Suppose a smooth map $f:M\rightarrow S^2$ has a disk $\Delta$ of regular values, with fiber $\Sigma_g$. Taking $\partial\Delta\subset S^2$ as the equator and $p:S^2\rightarrow D^2$ as the obvious projection map that sends $\partial\Delta\mapsto\{|z|=1\}$, the projection move may be taken as a stable unfolding of the map $p\circ f$; however there is a straightforward description by a cut-and-paste operation as follows. The complement of $f^{-1}(\nu\partial\Delta)$ in $M$ is a pair of maps to the disk, one of which is a submersion (that coming from $\Delta$), while the other may have nonempty critical locus. Each of these maps has boundary $\Sigma_g\times S^1$ with the obvious fibration structure. These fibrations are superimposed such that the preimage orientation of the fiber coming from $\Delta$ is reversed and that coming from the other disk is preserved to form a new map \begin{equation}M\setminus(\Sigma_g\times[0,1]\times S^1)\rightarrow\{z\in\C:|z|\leq1/2\}\end{equation} with disconnected fibers; in particular, the preimage of each boundary point is $\Sigma_g\sqcup\Sigma_g$, and gluing $\beta_g$ to this fibration along the boundary in the trivial way completes the map $M\rightarrow D^2$. The projection move is simply to pass from $f$ to this new map, and result of applying this move to a broken Lefschetz fibration (or SPWF, purely wrinkled fibration, stable map, etc.) will be called a \emph{projected} broken Lefschetz fibration (respectively, projected SPWF, etc.).\\

\begin{lemma}\label{cylinder}Suppose $f,g:M\rightarrow D^2$ are projected purely wrinkled fibrations for some closed, smooth 4-manifold $M$. Then there is a deformation between them that is realized as a sequence of the moves in Section \ref{icat}.\end{lemma}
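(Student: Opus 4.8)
The plan is to reduce to the main theorem by first arranging that $f$ and $g$ agree on a collar of $\partial D^2$ and then connecting them by a deformation supported in the complementary disk. Since the moves of Section \ref{icat} are each supported in a small ball in the base, the single circle of definite folds contributed by the $\beta$-model will never be touched; it is simply carried along.

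First I would record that, by construction, the restriction of a projected wrinkled fibration to the annular collar $\{1/2\le|z|\le1\}$ is the standard model $\beta_k$ for some genus $k$, while over $\{|z|\le1/2\}$ it is the disjoint union of a trivial $\Sigma_k$-bundle with the restriction of the original wrinkled fibration to the complement of a regular-value disk. In particular the only definite folds present form the one circle in $\beta_k$ (sitting over $\partial D^2$), and over $\{|z|\le1/2\}$ the map is a wrinkled fibration. Say $f$ is obtained by projecting $f_0\colon M\to S^2$, with collar $\beta_{k_1}$, and $g$ by projecting $g_0\colon M\to S^2$, with collar $\beta_{k_2}$. The next step is to equalize these genera so that $f$ and $g$ agree near $\partial D^2$: applying the flip-and-slip modification of Figure \ref{flipandslip} to the indefinite circles of each collar, exactly as in the proof of Corollary \ref{spwfcor}, raises each of $k_1,k_2$ to a common larger integer $k$ using only moves from Section \ref{icat} and leaving the definite circle untouched. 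Choosing the flips so that the $2k+1$ indefinite circles come to lie nested, in the prescribed order, inside the definite circle, we may assume after this step that $f$ and $g$ coincide with $\beta_k$ on a fixed collar $\nu$ of $\partial D^2$.

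Now $f$ and $g$ are maps $M\to D^2$ that agree on $\nu$, that are wrinkled fibrations over the complementary disk $D=D^2\setminus\mathrm{int}\,\nu$, and that are homotopic rel $\nu$ (all obstructions to such a homotopy vanish since $D^2$ is contractible). I would choose such a homotopy, keep it fixed on $\nu$, and perturb it to a $(1,1)$-stable deformation $F$; over $D$ this deformation may develop definite folds, but its two ends remain wrinkled fibrations there. Running the argument of Section \ref{pf} over $D$ relative to the endpoints and to $\nu$ — removal of definite swallowtails (Lemma \ref{defswallowtails}), the Cutting Lemma \ref{cut}, the Simplification Lemma \ref{simplemma}, and the removal of the definite locus as in Section \ref{removal} — converts $F$ into a deformation of wrinkled fibrations over $D$, still fixed on $\nu$, from $f$ to $g$. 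By Theorem 4.4 of \cite{L} (equivalently, by the final step in the proof of Theorem \ref{T}) such a deformation is realized by a sequence of the moves of Section \ref{icat}, all supported in $D$ and hence disjoint from the definite circle of $\beta_k$. Concatenating with the genus-equalizing moves from the previous paragraph yields the desired sequence joining the original $f$ and $g$.

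The main obstacle is checking that the machinery of Section \ref{pf} goes through for a disk target and relative to a boundary collar. Every ingredient there is a local substitution of one fibered piece for another with isomorphic boundary fibration, so it is insensitive to whether the base is $S^2$ or $D^2$ and automatically respects $\nu$; the point that needs attention is that the canceling pairs of flips used in Section \ref{removal} to eliminate a definite disk must extend in the $t$-direction inside the ambient indefinite-fold surface, which is available precisely because the definite components to be removed lie in the interior of $D$, away from $\nu$.
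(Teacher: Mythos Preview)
Your approach is different from the paper's, and there is a genuine gap in the ``equalize the genera and coincide on the collar'' step.

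First, flip-and-slip (Figure~\ref{flipandslip}) does raise fiber genus, but it does so by introducing \emph{cusps} on an indefinite circle; it does not add smooth concentric indefinite circles with the specific vanishing-cycle pattern of $\beta_k$. So after your modifications the collar is no longer of the form $\beta_k$ for any $k$, and the sentence ``we may assume that $f$ and $g$ coincide with $\beta_k$ on a fixed collar $\nu$'' is not justified. Second, and more seriously, even if both collars were abstractly modeled on the same $\beta_k$, you still need $f$ and $g$ to agree \emph{as maps out of $M$} on a common subset: the preimages $f^{-1}(\nu)$ and $g^{-1}(\nu)$ must be the same piece of $M$, and the two identifications with $\Sigma_k\times[0,1]\times S^1$ must match. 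Nothing in the projection construction forces this; the trivialization of the regular fiber used to glue in $\beta_{k_i}$ depends on the original $f_0,g_0$, and you have given no mechanism (within the Section~\ref{icat} moves) to align them. Without literal agreement on $\nu$ you cannot take the homotopy rel $\nu$ that the rest of your argument needs.

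The paper avoids both problems by never trying to match collars. It starts from an arbitrary deformation $h'$ between $f$ and $g$, chooses an arc $\gamma$ of regular points in $M_I$, and performs a definite birth along every slice of $\gamma$, then inverse merges, to manufacture a \emph{new} cylinder $\tilde C$ of definite folds running from $t=0$ to $t=1$. An isotopy pushes $\tilde C_t$ out to become the boundary of the image at every $t$. This ruins the endpoints, so the paper prepends and appends short deformations (Figure~\ref{cylfig2}) that absorb the \emph{original} definite circles of $f$ and $g$ into $\tilde C$ via a definite merge, an isotopy across the region $A$, and an inverse definite birth. The isotopy step is exactly where the projected structure is used: one of the two $\Sigma_g$ fiber components over the inner disk is, by construction of the projection move, a \emph{trivial} $\Sigma_g$-bundle, and the definite arc slides across $A$ inside the complement of the other (possibly complicated) component. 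Once the cylinder $C$ connecting $S_+(f)$ to $S_+(g)$ is in place, removing a neighborhood of $C$ leaves a deformation of wrinkled fibrations over the disk, and Theorem~\ref{T} finishes the job. The key point you were missing is that the lemma is proved by \emph{building} the connecting definite cylinder, not by assuming the maps already share one.
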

\begin{proof}Certainly $f$ is homotopic to $g$ because $D^2$ is contractible, so begin with an arbitrary deformation between $f$ and $g$. Any stable map $M\rightarrow D^2$ has a compact image whose boundary is composed of definite fold points \cite{S}, and in order to prove the lemma, it is sufficient to arrange that the definite circles $S_+(f)\subset M_0$ and $S_+(g)\subset M_1$ bound a cylinder $C\subset M_{[0,1]}$ consisting entirely of definite fold points, on which the deformation is injective, and further that $C_t\subset M_t$ maps to the boundary of the image for all $t$. Then for a sufficiently small neighborhood $\nu C$, the restriction to $M_{[0,1]}\setminus\nu C$ is a deformation between purely wrinkled fibrations over the disk relative to $\partial M$. Then Theorem \ref{T} implies the remaining definite locus may be removed, resulting in the required deformation $h$. It remains to construct $C$.\\

Beginning with an arbitrary deformation $h'$ such that $h'_0=f$ and $h'_1=g$, for dimensional reasons it is possible to choose an arc $\gamma:[0,1]\rightarrow M_{[0,1]}$ such that $\gamma(t)$ is a regular point of $h'_t$ for all $t\in[0,1]$. Further, there is a neighborhood $\nu\gamma$ which is also disjoint from $\crit(h')$. Finally, one may choose $\gamma$ so that its endpoints are sufficiently close to $S_+(h'_0)$ and $S_+(h'_1)$ to lie on the spherical fibers coming from the definite local models. By a homotopy of $h'$ which is supported on $\nu\gamma$, the first step is to perform a definite birth centered at each point of $\gamma$; in $M_{[0,1]}$ the new critical locus appears as a cylinder parallel to $\gamma$, with a decomposition into a rectangle of definite fold points meeting a rectangle of indefinite fold points along two arcs of definite cusp points. In each slice $M_t$, a small circle appears as in Figure \ref{defbirth}\begin{figure}\begin{center}\includegraphics[width=\linewidth]{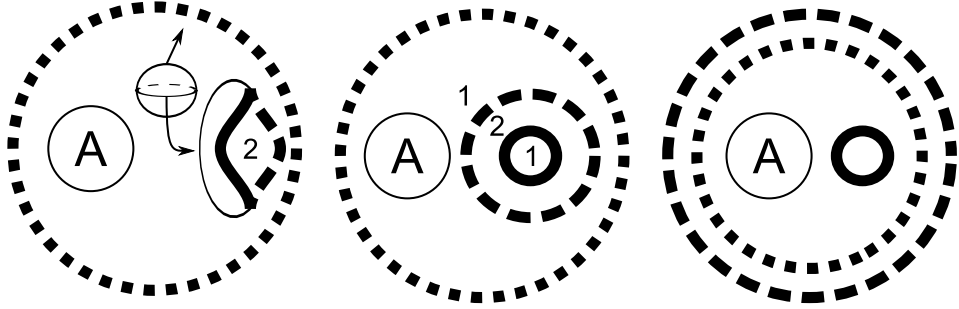}\end{center}\caption{Modifications of a deformation $h$ as they appear at $t=0$. Integers indicate the number of sphere components in the fiber, which is a disjoint union of spheres in all regions except possibly $A$. The initial point of $h''$ is on the right.}\label{cylfig1}\end{figure}; the result of this homotopy in the slice $M_0$ appears in the left side of Figure \ref{cylfig1}, where the region marked $A$ is some purely wrinkled fibration coming from the projection move. The next step is to perform, at every value of $t$, inverse merging moves between the definite cusp points as indicated by the left side of the figure, so that the definite circle is on the outside and the indefinite circle is on the inside for all $t$ (the middle of the figure shows what this looks like at the slice $M_0$). At this point, the deformation $h'$ has been altered such that the endpoints are not identical to $f$ and $g$, and two new circles of critical points sweep out a pair of cylinders parallel to $\gamma$, one definite (which we denote $\tilde{C}$, and which appears as the more heavily dotted circle in the middle of Figure \ref{cylfig1}) and the other indefinite.\\

The sphere fibers that contract at $\tilde{C}$ map to an annulus in $D^2$ at each value of $t$, depicted as the region labeled with a 2 in the middle of Figure \ref{cylfig1} at $t=0$. At each value of $t$, the definite circle forming the outer edge of this annulus may be expanded by a homotopy of $h'$ such that it becomes the boundary of the image at each $t$. The result of this expansion at $t=0$ appears in the figure at the right, and this new deformation will be called $h''$. By construction, $h''$ now has an outermost definite cylinder which up until now has been called $\tilde{C}$, and which now satisfies the requirements to be $C$, except it remains to arrange for the endpoints of $h''$ to be $f$ and $g$.\\

Figure \ref{cylfig2}\begin{figure}\begin{center}\includegraphics[width=\linewidth]{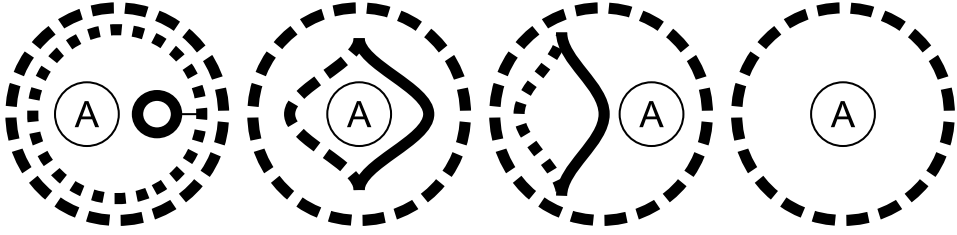}\end{center}\caption{A deformation that serves as a prefix to $h''$.}\label{cylfig2}\end{figure} describes a deformation that begins with $h''_0$ and ends with a map that is related to $f$ by an isotopy. Reversing the parameter $t$ and appending this map to the beginning of $h''$ (and the analogous counterpart to the end of $h''$) gives the required deformation $h$. The list of moves is straightforward: from left to right is a definite merge, then an isotopy in which the definite arc passes across the region $A$, and finally an inverse definite birth. Unlike the other two steps, the isotopy is not obvious and requires further argument in which we consider $A$ as the complex unit disk. Just before the isotopy in question, the fibration over $A$ has the following characterization. The cusped circle around $A$ contributes to each point preimage a sphere fiber component. Aside from these extra spheres, referring to Figure \ref{beta}, a neighborhood of $|z|=1$ has its own sphere fibers which with decreasing radius increase in genus to some even integer $2g$. Traveling further, the fiber separates into a pair of genus $g$ surfaces at an oppositely oriented indefinite circle. At this radius, the point preimage is $\Sigma_g\sqcup\Sigma_g\sqcup S^2$. Because of the way the projection move is defined, one of these genus $g$ fiber components (coming from the disk $\Delta$) traces a trivial fibration over the remainder of $A$, while the other may be part of a more complicated, yet disjoint, fibration structure. The isotopy now explicitly makes sense as one that takes place in the complement of the more complicated piece, resulting in a fiberwise connect sum between the sphere component and the appropriate fiber component as the indefinite arc sweeps from right to left.\end{proof}
\begin{thm}\label{T2}Suppose $M$ is a smooth 4-manifold, and that $f:M\rightarrow S^2$ is a stable map or a broken Lefschetz fibration. Then for any stable map or broken Lefschetz fibration $g:M\rightarrow S^2$, there is a sequence of moves of Section \ref{icat}, along with the projection move, relating $f$ to $g$.\end{thm}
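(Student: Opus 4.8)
The plan is to reduce Theorem \ref{T2} to Lemma \ref{cylinder} by using the projection move to push the whole problem onto the disk, where any two maps are automatically homotopic. The first step is to replace each of $f$ and $g$, up to a sequence of moves from Section \ref{icat}, by a purely wrinkled fibration $M\to S^2$ with connected fibers. When the given map is a broken Lefschetz fibration this is exactly Corollary \ref{spwfcor}, since a simplified purely wrinkled fibration is in particular a purely wrinkled fibration with connected fiber. When the given map is a stable map with connected fibers there is nothing to do: its critical locus then consists only of indefinite folds and cusps, so it is already a purely wrinkled fibration. In either case I would also apply the genus-inflation modification of Figure \ref{flipandslip}, exactly as in the proof of Corollary \ref{spwfcor}, so that in particular there is a disk of regular values over which the fiber is connected.

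Having reduced to purely wrinkled fibrations $F$ and $G$ of $M$, the second step is to pick such a disk of regular values for each and apply the projection move, obtaining maps $F',G':M\to D^2$ which by construction are projected purely wrinkled fibrations. Since $D^2$ is contractible, $F'$ and $G'$ are homotopic, so Lemma \ref{cylinder} produces a deformation from $F'$ to $G'$ realized by a sequence of moves of Section \ref{icat}. Because the projection move is reversible, concatenating
\[ f \longrightarrow F \longrightarrow F' \longrightarrow G' \longrightarrow G \longrightarrow g, \]
where the two outer arrows are sequences of Section \ref{icat} moves supplied by the first step, the inner arrows $F\to F'$ and $G'\to G$ are the projection move and its inverse, and the middle arrow is Lemma \ref{cylinder}, exhibits $f$ and $g$ as related by moves of Section \ref{icat} together with the projection move. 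It is precisely this projection step that allows $f$ and $g$ to lie in distinct homotopy classes of maps to $S^2$, so that a true uniqueness statement becomes possible.

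Essentially all of the substance is already carried by Lemma \ref{cylinder} and Corollary \ref{spwfcor}; what remains is bookkeeping, together with one point I expect to be the main obstacle, namely the reduction in the first step when the domain map is an \emph{arbitrary} stable map. Here one is restricted to the moves of Section \ref{icat}, while the natural tools for removing a circle of definite folds are the definite births, merges and swallowtails of Section \ref{dcat}, which are not on the permitted list — and a Section \ref{icat} move does not itself create or destroy a definite fold. One must therefore either restrict to stable maps whose fibers are connected, in which case (as noted) there are no definite folds and nothing to do, or give a separate argument that the definite fold locus can be removed using only the allowed moves, the key fact being that a definite fold forces an $S^2$ summand of the fiber. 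The one remaining, much easier loose end is to check that the projection move genuinely applies, i.e. that a disk of regular values with connected fiber exists, which is arranged by the genus inflation from the proof of Corollary \ref{spwfcor}.
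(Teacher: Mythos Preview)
Your outline is essentially the paper's own argument: project both maps to $D^2$ and invoke Lemma \ref{cylinder}. The paper's proof is terser than yours; it does not route through Corollary \ref{spwfcor} but simply applies the projection move to $f$ and $g$ directly and cites Lemma \ref{cylinder}. Your detour through an SPWF is harmless (and it does guarantee a disk of regular values with connected fiber, which the projection move needs), but strictly speaking one only has to wrinkle the Lefschetz points of a broken Lefschetz fibration to obtain a purely wrinkled fibration before projecting. The genus-inflation step you mention is not needed.

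Two further points. First, you omit the case $\partial M\neq\emptyset$: Lemma \ref{cylinder} is stated for closed $M$, and the paper handles manifolds with boundary by doubling $M$ (and $f$, $g$) along $\partial M$, applying the closed case to $DM$, and restricting back to $M$. Second, the obstacle you flag for an \emph{arbitrary} stable map is genuine: every move in Section \ref{icat} preserves emptiness of the definite fold locus, so a stable map with definite folds cannot be carried to a purely wrinkled fibration by those moves alone, and hence Lemma \ref{cylinder} does not literally apply after projecting such a map. The paper's proof is equally brief on this point---it writes ``apply the projection move to $f$ and $g$'' and then cites Lemma \ref{cylinder}---so either ``stable map'' in the statement is meant to exclude definite folds, or the argument of Lemma \ref{cylinder} is tacitly being used in a slightly more general form than stated. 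Your instinct to isolate this as the one real difficulty is correct.
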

\begin{proof}Theorem \ref{T} covers the case where $f$ and $g$ are homotopic. When $f$ and $g$ are not homotopic, the first step is to apply the projection move to $f$ and $g$. Then Lemma \ref{cylinder} implies the moves of Section \ref{icat} are sufficient to prove the theorem in the case that $M$ is closed. If $\partial M\neq\emptyset$, then more complicated phenomena may arise; however one may simply double $M$ (as well as $f$ and $g$ in the obvious way), apply Lemma \ref{cylinder}, and restrict the result to $M\subset DM$ to obtain the required sequence of moves.
\end{proof}
\end{subsubsection}
\end{section}
\begin{section}{Questions}\label{appendix}
This section lists some related questions of interest to this author and a few of the issues surrounding this work that still require a satisfactory treatment.
\begin{subsubsection}{Minimal genus}The result of section \ref{spwf} raises various minimal genus questions, connected with the obvious uniqueness question for SPWF:
\begin{df}Let $F\subset M$ be a smoothly embedded surface with trivial normal bundle. \begin{itemize}\item The \emph{broken genus} $g_b(M,\alpha)$ of a 4-manifold is the minimal genus fiber of all simplified broken Lefschetz fibrations $M\rightarrow S^2$ whose fiber is in the homology class $\alpha\in H_2(M)$.\item The wrinkled genus $g_w(M,\alpha)$ is the analogous invariant for SPWF.\end{itemize}\end{df}
Thus, along with the classical minimal genus, there are a few distinct notions of minimal genus for fibration structures. The invariants $g_b$ and $g_w$ are not a priori the same, if only because of the more complicated relation between Lefschetz and round vanishing cycles in broken fibrations. Given a fibration realizing $g_b$, it is not difficult to obtain some SPWF of genus $g_b+1$, and the wrinkling move gives a way to turn any SPWF into a broken fibration of the same genus, so it is possible that these invariants are interchangeable. Certainly symplectic Lefschetz fibrations realize the smallest fiber genus among all these notions, so it may be that $g_b$ and $g_w$ offer some kind of measure of ``how far" a manifold is from being symplectic.\end{subsubsection}
\begin{subsubsection}{Uniqueness of SPWF}The following questions seem likely to have affirmative answers:
\begin{conj}Any two homotopic simplified broken Lefschetz fibrations are related by a sequence of modifications shown in Figure \ref{flipandslip}, called ``flip and slip" in \cite{B2}.\end{conj}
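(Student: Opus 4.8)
The plan is to refine Theorem \ref{T} to the simplified setting by a normalization argument that keeps every intermediate map close to a simplified purely wrinkled fibration (SPWF) of some fixed large genus. The first step is genus stabilization: the flip-and-slip modification of Figure \ref{flipandslip} raises the fiber genus by one (as used in Section \ref{spwf}), so by applying it enough times to each of the two given simplified broken Lefschetz fibrations we may assume they share a large fiber genus $g$; this costs nothing, since flip-and-slip is the move we are allowed to spend, and at large $g$ the gluing data is rigid in that no ambiguity survives from the noncontractible loops in $\mathrm{Diff}(S^2)$ or $\mathrm{Diff}(T^2)$. A preliminary point, to be settled first, is how the Lefschetz critical points enter: wrinkling each Lefschetz point (Figure \ref{wrinkle}) and absorbing the resulting three-cusped circle into the main round circle by an inverse merge converts a simplified broken Lefschetz fibration into an SPWF, and one must check that this composite is itself a sequence of flip-and-slips together with the $C^0$-small sink and unsink moves; granting this, it suffices to relate two genus-$g$ SPWFs $\bar f_0,\bar f_1$.

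By Theorem \ref{T} there is a deformation from $\bar f_0$ to $\bar f_1$ realized by a finite sequence of birth, merge, flip, sink, wrinkle and isotopy moves. I would interleave this sequence with applications of the simplification algorithm of Corollary \ref{spwfcor}, arranged so that the map is a genus-$g$ SPWF between any two consecutive moves; concatenating the passages between consecutive SPWFs then gives the desired sequence, provided each such passage is a product of flip-and-slips and isotopies. The combinatorial engine for this is Example \ref{birthmerge}: two successive indefinite flips have, up to isotopy, the same effect as a birth followed by a merge, while a flip-and-slip is exactly two flips followed by a slipping isotopy. Consequently a birth that is eventually consumed by a partner merge contributes (after the intervening isotopy) a flip-and-slip; a flip that is undone by its inverse, or a sink undone by an unsink, contributes nothing; and after re-normalizing each intermediate stage to an SPWF the only thing left to verify is that no unmatched births, merges or flips remain. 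An unmatched birth changes the number of cusps of an SPWF without changing its homotopy class, so at large genus it must be removable; packaging that removal as a genuine flip-and-slip, rather than an ad hoc isotopy that destroys the simplified form, seems to require an explicit local model in the spirit of the cutting lemma (Lemma \ref{cut}).

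I expect the genuine difficulty to concentrate in two places: (i) the preliminary reduction past the Lefschetz points, and (ii) the bookkeeping of unmatched births, which in turn forces one to develop a Hurwitz-type calculus for sliding the round vanishing cycle past the Lefschetz vanishing cycles — precisely the interaction flagged at the end of Section \ref{spwf}. It is likely that one should first bring the cyclic chain of round vanishing cycles of $\bar f_0$ and $\bar f_1$ into a standard form by flip-and-slip and only then compare them, so that the whole argument reduces to a normal-form statement about such chains in a genus-$g$ surface.
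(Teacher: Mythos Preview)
The statement you are attempting is a \emph{conjecture} in the paper, not a theorem: it appears in Section~\ref{appendix} among open questions, and the paper explicitly says that ``the difficulty in proving either of these conjectures is in keeping track of the round vanishing cycles when the initial flips are performed on distinct components of the fold locus of $f$.'' There is no proof in the paper to compare against; your proposal is an attack on an open problem.

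As a program the outline is reasonable, but the steps you label as routine are exactly where the conjecture is hard. The central gap is the ``pairing'' argument. In a deformation supplied by Theorem~\ref{T}, births, merges and flips occur in some arbitrary order with arbitrary isotopies interspersed; there is no canonical way to pair a birth with a later merge so that the composite is isotopic to the \emph{local} birth-then-merge of Example~\ref{birthmerge}. The intervening isotopy may drag the newborn circle across other fold arcs, and the vanishing cycles at the moment of merge need bear no simple relation to those at birth, so converting such a pair into two flips (let alone a flip-and-slip) is precisely the vanishing-cycle bookkeeping the paper names as the obstruction. Re-running the simplification algorithm of Corollary~\ref{spwfcor} after each elementary move does not help: that algorithm itself spends births, merges and flips freely, so the claim that ``each passage between consecutive SPWFs is a product of flip-and-slips and isotopies'' is circular unless you have already shown that every move used in the algorithm is a flip-and-slip in disguise.

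The preliminary reduction has the same defect. Wrinkling a Lefschetz point and inverse-merging the tricuspidal circle into the main round locus is a composite of wrinkle and merge; sink and unsink are separate moves as well; none of these is a flip-and-slip. Showing that unsinking all cusps of an SPWF and then re-sinking them in a different cyclic order can be realized by flip-and-slips is essentially the Hurwitz-type statement you allude to at the end, and that is the content of the paper's \emph{second} open conjecture. In short, your outline correctly locates where the problem lives, but it does not supply the missing local models or the control over round vanishing cycles that the paper identifies as the core difficulty.
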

\begin{conj}Let $f_1,f_2:M\rightarrow S^2$ be simplified broken Lefschetz fibrations obtained from the SPWF $f:M\rightarrow S^2$ by performing flip-and-slip modifications where the initial flips are chosen to occur at different points of $\crit{f}$, then unsinking all cusps. Then $f_1$ and $f_2$ are related by performing Hurwitz moves on their Lefschetz critical points.\end{conj}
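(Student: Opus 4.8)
The plan is to reduce the conjecture to a bookkeeping computation on vanishing cycles, followed by an appeal to the classical fact that cyclic rotations of a Lefschetz factorization are Hurwitz equivalent. Recall (from the discussion following Corollary \ref{spwfcor}) that an SPWF $f$ is encoded by a cyclic chain $\gamma_1,\dots,\gamma_k$ of simple closed curves on its higher-genus fiber $\Sigma$, consecutive ones meeting transversely at a single point, with $\gamma_i$ the round vanishing cycle along the arc of $\crit f$ running between the $(i-1)$st and $i$th cusp. A simplified broken Lefschetz fibration carries two pieces of data: its single smooth round circle together with the round handle that circle records, and, over the higher-genus region (a disk $D$), an ordered tuple of Lefschetz vanishing cycles, well defined up to Hurwitz moves once a base point on $\partial D$ is fixed and up to a global change of fiber coordinates. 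First I would compute both pieces of data for $f_1$ and $f_2$, aiming to show that they share the round-handle piece and that their Lefschetz tuples differ only by a cyclic rotation.

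Second, I would track the composite operation ``flip-and-slip, then unsink every cusp'' on this data. The flip-and-slip of Figure \ref{flipandslip} opens loops at the chosen points of $\crit f$ and then slips them once around the cusped circle; following the monodromy rules of Figures \ref{flip} and \ref{flipandslip} as a loop is dragged past each of the original cusps yields a predictable modification of the round vanishing cycle together with a controlled collection of new cusps. By the ``Sink'' paragraph, unsinking a cusp whose two vanishing cycles are $a$ and $b$ produces a Lefschetz critical point with vanishing cycle the concatenation $b\ast a$; unsinking all cusps present after the slip therefore turns the configuration into an SBLF whose ordered Lefschetz tuple is, up to Hurwitz moves, a fixed cyclic word in the curves $\gamma_i$ and the flip data — concretely involving the concatenations $\gamma_{i+1}\ast\gamma_i$ at the original cusps — cut into a linear tuple at a position dictated by the flip locations, while the resulting smooth round circle is a pushoff of $\crit f$ and so is independent (up to isotopy) of where the flips were placed. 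Performing this for two choices of flip location thus produces two SBLFs with identical round handle and with Lefschetz tuples that are cyclic rotations of one another.

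Third, I would realize the passage from $f_1$ to $f_2$ concretely: let the flip locations move, by an isotopy of $\crit f$, from their positions for $f_1$ to their positions for $f_2$, and perform ``flip-and-slip, then unsink'' at the moving locations for every intermediate value of the parameter. This is a homotopy from $f_1$ to $f_2$ through simplified broken Lefschetz fibrations along which the round circle is merely carried by isotopy and never changes type; each time a flip location sweeps past one of the original cusps, exactly one Lefschetz vanishing cycle is transported around the adjacent Lefschetz critical point, which after unsinking is an elementary transformation, i.e. a Hurwitz move. Composing the finitely many such moves and the intervening isotopies — equivalently, absorbing a final global conjugation into a change of fiber coordinates — relates $f_1$ to $f_2$ by Hurwitz moves, as claimed; this is the same mechanism as the classical fact that moving the base point of a Lefschetz factorization around a loop in the base effects a sequence of Hurwitz moves.

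The main obstacle is the second step: making the combinatorics of the \emph{slip} rigorous. The slip is a large isotopy carrying a loop of indefinite folds all the way around the cusped circle, and one must verify that its net effect on the round vanishing cycle and on the monodromy is exactly the predicted cyclic one — the delicate point being the ambiguity coming from the noncontractible loops in $\mathrm{Diff}(S^2)$ and $\mathrm{Diff}(T^2)$ responsible for the ``odd round handle'' phenomenon noted after Example \ref{tradex}. As in the corollaries of Section \ref{spwf}, I expect this to be handled by first inflating the fiber genus (via the modification of Figure \ref{flipandslip}) until those gluing ambiguities disappear, so that the vanishing-cycle bookkeeping is unambiguous. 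A secondary point requiring care is to confirm that the homotopy of the third step can be kept within the class of simplified broken Lefschetz fibrations — that the round circle and the number of Lefschetz critical points stay fixed throughout — so that ``Hurwitz moves'', rather than the more general moves of Theorem \ref{T}, is the correct conclusion; the preceding conjecture of this subsection, were it available, would provide an alternative route at exactly this point.
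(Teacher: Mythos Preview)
The statement you are attempting to prove is labeled as a \emph{conjecture} in the paper, and the paper offers no proof of it. Indeed, immediately after stating the two conjectures of that subsection the author writes: ``The difficulty in proving either of these conjectures is in keeping track of the round vanishing cycles when the initial flips are performed on distinct components of the fold locus of $f$. When the flips occur on the same component, the result is easy to deduce.'' There is therefore no proof in the paper against which to compare your proposal.

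That said, your outline is a reasonable program and is consonant with the author's hint: you correctly identify that the round-handle data should be independent of the flip location and that the problem reduces to comparing Lefschetz factorizations, and you invoke the classical fact that cyclic rotation of a factorization is realized by Hurwitz moves. However, your third step --- moving the flip locations across cusps by isotopy and asserting that ``exactly one Lefschetz vanishing cycle is transported around the adjacent Lefschetz critical point'' --- is precisely the place the paper flags as the unresolved difficulty. An SPWF has a single cusped critical circle, but its \emph{fold locus} (the circle minus the cusps) has one component per arc between consecutive cusps; when the two flips lie on distinct such arcs, the slip drags the flip loops past genuine cusps, and the effect on the round vanishing cycle is not obviously the clean cyclic shift you posit. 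Your proposal acknowledges this (``making the combinatorics of the slip rigorous'') but does not resolve it; the genus-inflation trick you mention removes the $\mathrm{Diff}(S^2)$/$\mathrm{Diff}(T^2)$ ambiguity but does not by itself control what happens when a flip location crosses a cusp. So your write-up is best read as a strategy toward the conjecture rather than a proof, and the gap you isolate is the same one the author leaves open.
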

The difficulty in proving either of these conjectures is in keeping track of the round vanishing cycles when the initial flips are performed on distinct components of the fold locus of $f$. When the flips occur on the same component, the result is easy to deduce.\end{subsubsection}
\begin{subsubsection}{Isotopies}This paper uses exclusively \emph{ad hoc} arguments to show that various isotopies are valid, at various points relying on diagrams which are as explicit as possible to convince the reader of the validity of a move, and this can be tedious. A systematic way to show that a given modification is a valid isotopy would make verification of base diagram arguments easier and thus more reliable.\end{subsubsection}
\begin{subsubsection}{Nullhomologous fiber components}The main theorem of this paper states that there exists a deformation between any pair of stable maps which is free of definite folds, assuming the endpoints are free of definite folds. More than simply the existence of \emph{some} such deformation, the proof that Perutz's Lagrangian matching invariant is a diffeomorphism invariant requires the existence of a \emph{near-symplectic cobordism}, in the language of \cite{P1}. This work does not imply such an existence result: it is easy to kill the near-symplectic condition by an isotopy in which a bit of an indefinite fold wanders back over itself, creating a nullhomologous (thus not near-symplectic) sphere component in the fiber. In order to prove diffeomorphism invariance, one might show, for example, that one can arrange for a deformation to have essential fiber components at each stage. Then the modified Donaldson-Gompf construction of a near-symplectic structure may be performed at almost every value of $t$ to obtain a near-symplectic deformation.\end{subsubsection}
\begin{subsubsection}{Removal in more general contexts}It may be interesting to singularity theorists to find the conditions under which one may eliminate definite folds from arbitrary stable maps $X^5\rightarrow Y^3$ by homotopy. It seems feasible to show that the modifications in this paper (specifically, those of the definite swallowtail substitution Lemma \ref{defswallowtails}, and, maybe less easily, the Cutting Lemma \ref{cut}) are realizable as homotopies. Then a short argument that one may open a disk of indefinite fold points within any closed surface of definite folds (using a higher-dimensional version of Example \ref{birthmerge}) would virtually complete the argument.\end{subsubsection}
%\begin{subsection}{Canceling swallowtails.}Considering the canceling swallowtails from Section \ref{removal}, the name comes from the fact that, given a deformation map $f$ that has a canceling pair, there is a homotopy of $f$ in which the two swallowtail points approach each other, momentarily converge and form an unstable isolated critical point within the fold locus, and then disappear, the cusp arcs also shrinking to nothing, resulting in an uninterrupted region of fold points. It is interesting to note that this homotopy leaves the class of deformations at that isolated point in $M_{I\times I}$, much like the moves in Section \ref{calc} in one lower dimension (so, in a sense, canceling a pair of swallowtails can be seen as a deformation of deformations). Certainly these pairs make sense for two IDI-type or DID-type swallowtails, but given a mixed pair of swallowtails chosen from the three types, it seems less clear what the cancellation homotopy would look like using a 1-parameter family of base diagrams. Such a homotopy seems to exist by virtue of the results of \cite{An1}, which imply that swallowtails of all kinds for stable maps $M^4\rightarrow N^2$ cancel in pairs via homotopy, and Lemma \ref{defswallowtails} which in some sense replaces a definite swallowtail with an odd number of indefinite swallowtails.\end{subsection}
\end{section}
\newpage

\end{document}